\documentclass[11pt,a4paper]{article}
\usepackage[T1]{fontenc}
\usepackage[round,authoryear]{natbib}
\usepackage{lmodern,amsmath,amssymb,amsthm,booktabs,longtable,geometry,microtype,placeins}
\usepackage[colorlinks=true,linkcolor=blue,citecolor=blue,urlcolor=blue]{hyperref}
\hypersetup{pdftitle={The complete existence spectrum of periodic complementary ternary pairs of weight nine},pdfauthor={Stelios D. Georgiou and Stella Stylianou}}
\newtheorem{theorem}{Theorem}[section]
\newtheorem{lemma}[theorem]{Lemma}
\newtheorem{proposition}[theorem]{Proposition}
\newtheorem{corollary}[theorem]{Corollary}
\theoremstyle{definition}\newtheorem{definition}[theorem]{Definition}
\theoremstyle{remark}
\newcommand{\Z}{\mathbb Z}\newcommand{\Q}{\mathbb Q}
\newcommand{\supp}{\operatorname{supp}}
\newcommand{\wt}{\operatorname{wt}}
\title{The complete existence spectrum of periodic complementary ternary pairs of weight nine}
\author{Stelios D. Georgiou\textsuperscript{*} and Stella Stylianou\\
\small School of Science, RMIT University, Melbourne, Australia\\
\small \textsuperscript{*}Corresponding author: \href{mailto:stelios.georgiou@rmit.edu.au}{stelios.georgiou@rmit.edu.au}\\
\small ORCID: \href{https://orcid.org/0000-0001-5096-0067}{0000-0001-5096-0067} (S.D.G.);\quad
\href{https://orcid.org/0000-0003-3677-2830}{0000-0003-3677-2830} (S.S.)}
\date{}
\begin{document}\maketitle
\begin{abstract}
For two sequences with entries $0,\pm1$, periodic complementarity means that their periodic autocorrelation functions sum to zero at every nonzero shift. We give necessary and sufficient conditions on the length for the existence of such pairs of total weight nine, where weight counts nonzero entries. The conditions decide existence at any prescribed length by divisibility tests. The main new result bounds the length of primitive pairs with two nonzero components. A pair is primitive if it cannot be shortened by removing uniformly inserted zeros after independent cyclic permutations. We prove primitive-length bounds of $64,48,64$ for weight distributions $(5,4),(6,3),(7,2)$. These bounds make the classification finite and prove that the earlier searches contain every required case. Exhaustive enumeration gives $91$ primitive sequence-equivalence classes. Including the five known circulant classes gives $96$ representatives from which every pair can be generated. The same catalogue gives class counts at every length. For constructing one pair, $33$ displayed seeds suffice. The results settle earlier conjectures on the weight distributions and decide exactly which orders admit the associated two-circulant weighing-matrix construction.
\end{abstract}
\noindent\textbf{Keywords:} periodic complementary sequences; ternary sequences; weighing matrices; cyclic group rings; signed differences.
\par\noindent\textbf{Mathematics Subject Classification (2020):} Primary 05B20; Secondary 05B10, 94A55.

\section{Introduction}
A ternary sequence has entries $0,1,-1$, and its weight is the number of nonzero entries. Following \citet{GK2002,G2010}, a pair of sequences of the same length has \emph{zero PAF} if the sum of their periodic autocorrelation functions (PAFs) is zero at every nonzero cyclic shift. Such a pair is also called \emph{periodically complementary}. This condition concerns the sum; the individual PAFs need not vanish. Their weight distribution $(r,s)$ records the weights of the two components. We determine all lengths admitting such pairs of total weight $r+s=9$, and describe every pair under sequence equivalence.

The problem arises in the construction of weighing matrices. A weighing matrix $W(v,k)$ is a square matrix of order $v$ with entries $0,\pm1$ and satisfies $WW^{\mathsf T}=kI_v$, where $I_v$ is the identity matrix of order $v$. A complementary pair of length $n$ and total weight nine supplies the first rows of two circulant matrices that give a $W(2n,9)$~\citep{GS1979,GK2002}. Thus an existence criterion for the pairs decides exactly when this construction can be used. A complete catalogue supplies all its sequence inputs, which can then be used in further weighing-matrix constructions. We do not classify matrices under arbitrary signed row and column permutations.

\citet{GK2002} investigated weight nine by separating its possible distributions. \citet{G2010} used signed differences to classify the distributions $(5,4)$ and $(6,3)$ through length $400$, and $(7,2)$ through length $250$. \citet{GDK2012} extended the $(5,4)$ calculation to lengths $400$--$500$. These searches supplied constructions and finite nonexistence results. Uniform zero insertion also gave infinite families from the known examples. The remaining question was whether a larger length could contain a pair outside those families. The scalar case $(9,0)$, in which the second component is zero, was already settled by the theory of circulant weighing matrices~\citep{AAMS2008,SS2013}.

Uniform zero insertion with an integer factor $t\ge1$ places $t-1$ zeros between consecutive entries of each component, including between the last and first entries. It changes length $m$ to $tm$ and preserves zero PAF and the component weights. We also call this operation \emph{inflation}; its converse is \emph{contraction}. To distinguish new starting pairs from repetitions of this construction, we use the following definition.

\begin{definition}\label{def:equivalence}
A cyclic permutation adds a fixed integer to all positions modulo the sequence length.
Pairs of the same component weights are \emph{equivalent} under independent cyclic permutations, reversals and sign changes of the components, together with a common position multiplier $j\mapsto uj\pmod n$, where $\gcd(u,n)=1$. We call this \emph{sequence equivalence}, following \citet{G2010}, and order the components by decreasing weight. A pair is \emph{primitive} if independent cyclic permutations cannot put it in the form obtained from a shorter pair by uniform zero insertion with factor $t>1$.
\end{definition}

Primitive pairs are needed for classification, not just for finding one example at each length. A composite length can admit a primitive pair even when a shorter pair already establishes existence there. We prove that every pair has a unique primitive contraction up to sequence equivalence. Consequently, a complete primitive catalogue determines every pair at every length.

The new structural result bounds the primitive length by $64$, $48$ and $64$ for distributions $(5,4)$, $(6,3)$ and $(7,2)$, respectively. It proves that no further primitive pairs occur beyond the finite ranges considered here. Enumeration within these bounds gives $91$ classes with two nonzero components. These are within the ranges of the earlier searches; the new point is the proof of completeness for all lengths. Together with the five established circulant classes, they give $96$ primitive representatives. Divisibility tests then decide existence without another sequence search. The catalogue also permits every class to be constructed or counted at a prescribed length.

The proof introduces an auxiliary integer coordinate for each nonzero entry. We show that complementarity forces this coordinate to be constant within each component. This prevents a nonzero rational solution of the associated homogeneous signed-difference equations after fixing one support position in each component at zero. A determinant argument then bounds the primitive length. \citet{LS2011} established finiteness results for circulant weighing matrices of fixed weight. Support-difference equations and determinant bounds occur in \citet[Theorem 13]{LS2018}. Here the essential additional step is the rigidity argument for two complementary components.

The following finite sets give the existence test. We call each a \emph{divisor base}: its members are admissible lengths, none divides another, and all admissible lengths are their multiples. For $(r,s)=(5,4),(6,3),(7,2),(9,0)$, respectively, put
\begin{equation}\label{eq:divisor-bases}
\begin{aligned}
 D_{54}&=\{5,7,8,11,12,13,17,19,23,29,41,43\},\\
 D_{63}&=\{7,10,11,12,13,15,17,19,23,25,29\},\\
 D_{72}&=\{10,12,13,14,15,19,23,25\},\\
 D_{90}&=\{13,24\}.
\end{aligned}
\end{equation}

\begin{theorem}\label{thm:main-spectrum}
Let $n$ be a positive integer and let $(r,s)\in\{(5,4),(6,3),(7,2),(9,0)\}$. A periodic complementary ternary pair of length $n$ and component weights $(r,s)$ exists if and only if $n$ is divisible by a member of $D_{rs}$ in~\eqref{eq:divisor-bases}. Consequently, a weight-nine pair of any distribution exists exactly when $n$ is divisible by a member of $D_{54}$.
\end{theorem}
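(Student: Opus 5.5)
The plan is to reduce the theorem to a finite primitive catalogue and then read off existence by divisibility. \emph{Sufficiency} is immediate from inflation. For each $d\in D_{rs}$ we exhibit, or cite from \citet{G2010,GDK2012}, one pair of length $d$ with weights $(r,s)$. For $(9,0)$ we use the circulant weighing matrices $CW(13,9)$ and $CW(24,9)$ \citep{AAMS2008,SS2013}. Uniform zero insertion with factor $t=n/d$ then gives a pair of length $n$, since inflation preserves zero PAF and the component weights. For \emph{necessity}, take any pair of length $n$ and contract it repeatedly. Each contraction divides the length, and the length cannot drop below $1$, so the process ends at a primitive pair of some length $m\mid n$. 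It therefore suffices to show that every primitive length $m$ is divisible by some member of $D_{rs}$.

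For $(9,0)$ this is the known classification of circulant weighing matrices of weight $9$: the primitive ones have order $13$ or $24$ \citep{AAMS2008,SS2013}. For the two-component distributions the key step is the primitive-length bound $m\le 64,48,64$ for $(5,4),(6,3),(7,2)$. I would prove it as follows. Write the supports as $a_1,\dots,a_r$ and $b_1,\dots,b_s$ in $\Z_m$, with one position in each component cyclically shifted to $0$. Zero PAF says that, at every nonzero shift, the signed differences $a_i-a_j$ and $b_k-b_l$ cancel in pairs. Fix one cancellation pattern, that is, a perfect matching of the $r(r-1)+s(s-1)=72-2rs$ ordered differences into cancelling pairs with opposite signs. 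Each matched pair gives a linear equation over $\Z$ that holds modulo $m$.

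\emph{Rigidity claim.} The homogeneous integer system attached to a pattern has no nonzero rational solution once one support position per component is fixed at $0$. To prove it, assign to each nonzero entry an auxiliary integer coordinate, namely a rational solution's value scaled to $\Z$. Follow the cancellation chains: a difference inside one component must be matched either inside that component or with a difference of the other component. I would argue that connectivity of these chains forces the coordinate to be constant on each component, and hence zero after normalisation. With rigidity in hand, choose a full-rank square subsystem with coefficient matrix $M$. The support vector lies in the kernel of $M$ modulo $m$, and primitivity means the supports generate $\Z_m$ together with the differences. Hence $m$ divides a maximal minor, which gives $m\le|\det M|$. Hadamard's inequality, with rows having at most four entries of $\pm1$, then gives bounds of the stated size.

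I expect the main obstacle to be this rigidity step. Some matchings may admit a nonzero rational solution, for example when a component's support splits into cosets, and those cases must be shown to correspond to non-primitive pairs. The bound $64$ may also require a sharper determinant estimate than Hadamard, for instance by a case analysis on pattern types, or by computer over all patterns. Once the bounds hold, exhaustive enumeration of primitive pairs of length at most $64$ (or $48$) is finite and yields the $91$ classes. Their lengths, whose minimal elements under divisibility form $D_{rs}$, prove necessity. The final sentence of the theorem follows because each element of $D_{63}$, $D_{72}$ and $D_{90}$ is a multiple of some member of $D_{54}$. For example $10$ is a multiple of $5$, $14$ of $7$, $15$ and $25$ of $5$, and $24$ of $8$ (or $12$), while $13,17,19,23,29$ already lie in $D_{54}$.
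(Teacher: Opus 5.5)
\textbf{Gap: the rigidity step.} Your outline matches the paper's: inflation for sufficiency; contraction to a primitive pair, a full-rank signed-difference system, $m$ dividing a nonzero minor, then finite enumeration. But the rigidity claim, which is the step that makes the problem finite, is not proved, and the argument you sketch for it cannot work.

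The matched equations only give relations $u_i-u_j=u_k-u_l$ among the auxiliary coordinates. Connectivity of such relations never forces $u$ to be constant on a component. In fact rigidity is false at other weights. For weight four, $A=1+x$, $B=1-x$ is a primitive complementary pair at every length $n\ge2$. Its single matching $a_1-a_0=b_1-b_0$ joins the two components, yet it has the rational solution $u=(1,1)$. Correspondingly, $(1+z,1-z)$ is complementary in $\Z[\Z]$. So any proof must use the arithmetic of the number nine.

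The paper proceeds as follows. It lifts a rational kernel vector to a pair in $\Z[C_n\times\Z]$ with positions $(v_j,u_j)$. This pair still satisfies $AA^*+BB^*=9$, because every matched difference agrees in both coordinates. It then proves Theorem~\ref{thm:free-rigidity}. Reduction modulo three (Lemma~\ref{lem:augmentation-free}) forces, after normalisation, a single slice of augmentation three, with every other slice balanced. The remaining splittings of each distribution across free coordinates are then excluded using four facts:
\begin{itemize}
\item products of nonzero binomials in $\Z[H]$ are nonzero;
\item $C(1-h)=0$ forces invariance of $C$ under $h$;
\item the first-moment argument applied to \eqref{eq:triangle-binomials};
\item the impossibility of $YY^*=4-2h-2h^2$ (Lemma~\ref{lem:four-impossible}).
\end{itemize}
Without this you have no length bound, and hence no proof of necessity.

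\textbf{Misdiagnosed failure mode.} You anticipate that matchings with a nonzero rational solution would correspond to non-primitive pairs. A nonzero rational kernel vector is an obstruction over $\Q$, whereas non-primitivity (supports lying in cosets) is a condition modulo $n$. The paper shows that every valid matching has rank seven over $\Q$, whether or not the pair is primitive. It uses primitivity only in the B\'ezout step that gives $n\mid D$.

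\textbf{Smaller corrections.} A matching row can contain a coefficient $\pm2$ when the two matched edges share a vertex, e.g.\ $a_j-a_i=a_i-a_k$. So your Hadamard estimate based on rows with four $\pm1$ entries is not valid as stated. The paper instead expands each minor multilinearly into $2^7$ totally unimodular incidence determinants to get $128$, and then sharpens this to $64,48,64$ by counting sign patterns. Any proved finite bound would suffice, provided the enumeration is run up to it. Finally, proper $CW(n,9)$ also exist at $n=26$, not only at $13$ and $24$; this is harmless here since $13\mid26$.
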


The necessity combines the new length bound with exhaustive enumeration in the bounded range. The sufficiency uses the displayed constructions in Section~\ref{sec:seed-constructions}. The theorem therefore gives a decision test and a way to obtain a pair whenever the test succeeds.

An existence test does not describe every solution. The next theorem does so by identifying the complete set of starting pairs needed for uniform zero insertion.

\begin{theorem}\label{thm:complete-generation}
Let $(A,B)$ be a periodic complementary ternary pair of length $n$ and total weight nine, with components ordered by decreasing weight; $B$ may be zero. Under the sequence equivalence of Definition~\ref{def:equivalence}, $(A,B)$ is equivalent to a uniform zero insertion of exactly one of the $96$ primitive representatives in Appendices~\ref{app:catalogue} and~\ref{app:scalar-catalogue}. If that representative has length $m$, then $m\mid n$ and the insertion factor is $t=n/m$: one inserts $t-1$ zeros between consecutive entries, including cyclically. The value $t=1$ is allowed.

Conversely, every listed primitive representative whose length divides $n$ gives a pair of length $n$ in this way. The catalogue contains $59$, $20$, $12$ and $5$ representatives for distributions $(5,4)$, $(6,3)$, $(7,2)$ and $(9,0)$, respectively, with maximum primitive lengths $43$, $29$, $28$ and $26$. In particular, every weight-nine pair contracts to primitive length at most $43$.
\end{theorem}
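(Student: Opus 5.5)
The proof of Theorem~\ref{thm:complete-generation} has three parts. First, every pair reduces to a primitive one, and that primitive form is unique. Second, the primitive lengths are bounded. Third, a finite enumeration inside those bounds produces the catalogue.

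\emph{Step 1: contraction and uniqueness.} Write each component as an element of the group ring $\Z[\Z_n]$, say $A=\sum_{j}a_jx^j$, and express zero PAF as $AA^{(-1)}+BB^{(-1)}=9$.
\begin{itemize}
\item For nonzero components, let $H_A$ be the subgroup generated by differences of $\supp A$, and define $H_B$ in the same way.
\item The pair is a $t$-fold inflation, after independent cyclic shifts, exactly when both supports lie in cosets of $t\Z_n$. Hence the maximal contraction factor is $t=n/|H_A+H_B|$, which is the index of the subgroup generated by all within-component differences.
\item When $B=0$, take $H_B=0$.
\end{itemize}
Contracting by this maximal $t$ gives a primitive pair of length $m=n/t$. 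Uniqueness needs two checks.
\begin{itemize}
\item Any contraction by a factor $t'$ must satisfy $t'\mid t$, so further contraction of a non-maximal contraction lands on the same primitive pair.
\item Each generator of sequence equivalence (cyclic shifts, reversals, signs, and multipliers $u$ coprime to $n$) commutes with inflation. A multiplier $u$ mod $n$ induces $u$ mod $m$, which is still a unit.
\end{itemize}
Therefore equivalent pairs have equivalent primitive contractions, and this is the ``exactly one'' assertion. The converse is immediate, since inflation preserves zero PAF and the component weights.

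\emph{Step 2: primitive length bounds.} For $(9,0)$ I would invoke the known classification of circulant $W(n,9)$ \citep{AAMS2008,SS2013}, which yields the five classes of maximal length $26$. For two nonzero components I would use the announced bounds $64,48,64$ via the rigidity argument from the introduction.
\begin{itemize}
\item Zero PAF forces the signed differences to cancel in pairs. Each cancellation is an equation $p_i-p_j\equiv p_k-p_l \pmod n$ among support positions.
\item Lift these equations to $\Z$. Fix one support point in each component at $0$, and show that the auxiliary integer coordinate is constant on each component.
\item Conclude that the homogeneous lifted system has only the trivial rational solution.
\item Then Cramer's rule expresses $n$ (or a multiple of it) in terms of a minor of a $0,\pm1$ coefficient matrix of bounded size. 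Primitivity ensures that the positions generate $\Z_n$, so $n$ divides a determinant, which Hadamard's inequality bounds.
\end{itemize}
This is the main obstacle. The delicate point is the rigidity claim. I would try it first by showing that a nonconstant coordinate would give a nontrivial grading of $\Z[\Z_n]$ compatible with $AA^{(-1)}+BB^{(-1)}=9$. That grading would contradict the fact that some difference within $A$ must cancel against a difference within $B$.

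\emph{Step 3: enumeration and counts.} The lengths are now bounded. For each $m\le 64$ (respectively $48,64$), enumerate all pairs of the given weights up to sequence equivalence by exhaustive signed-difference search, keeping only primitive ones.
\begin{itemize}
\item This should give $59,20,12$ classes, which together with the $5$ scalar classes make $96$.
\item The maximum lengths should be $43,29,28,26$.
\end{itemize}
Finally, every weight-nine pair contracts to a listed representative of length at most $\max\{43,29,28,26\}=43$.
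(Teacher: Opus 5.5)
Your decomposition matches the paper's proof. It uses maximal contraction by the gcd of within-component differences; your index of $H_A+H_B$ is exactly the paper's $d$. It then uses the primitive-length bounds of Theorem~\ref{thm:uniform64}, exhaustive enumeration inside those bounds, and the cited circulant classification for $(9,0)$. If you simply cite Theorem~\ref{thm:uniform64}, the rest is the paper's assembly, with one small omission in Step~1. You argue only descent (a unit modulo $n$ reduces to a unit modulo $m$), which gives ``at most one''. For ``at least one'' you must lift the length-$m$ equivalence between the primitive contraction and its catalogue representative to length $n$. That needs every unit modulo $m$ to lift to a unit modulo $n$ (Chinese remainder theorem), which ``commutes with inflation'' tacitly assumes.

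The genuine gap is in the part of Step~2 you actually argue, the rigidity of the free coordinate. A nonconstant grading does not contradict a cross-component cancellation. If $A$ and $B$ each occupy two free-coordinate levels with the same gap, the cross-level products $A_1A_0^*$ and $B_1B_0^*$ may cancel, and nothing in the grading forbids it. Your argument also never uses the weight nine, so it would equally ``prove'' rigidity for the Golay pair $A=1+z$, $B=1-z$ in $\Z[\Z]$. There $AA^*+BB^*=4$, the free coordinate is nonconstant, and the difference $z$ cancels across components. The paper's proof is arithmetic and specific to nine. Lemma~\ref{lem:augmentation-free} reduces modulo three, using that $a^2+b^2\ne0$ over $\mathbb F_3$ unless $a=b=0$, to force augmentation $3$ at one level and $0$ at all others. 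A case analysis by distribution then uses Lemmas~\ref{lem:binomial-facts} and~\ref{lem:four-impossible}, and, for $(5,4)$ with two levels in each component, the first-moment argument on~\eqref{eq:triangle-binomials}. Separately, Hadamard's inequality will not yield $64,48,64$, because matching rows can have the form $2e_j-e_i-e_l$, so the coefficients are not only $0,\pm1$. The paper instead expands each $7\times7$ minor multilinearly into at most $2^7$ incidence determinants of absolute value at most one. It then refines this count for each sign pattern.
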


The primitive class and its length are unique, although the transformations producing a particular pair need not be. The $33$ seeds in Section~\ref{sec:seed-constructions} suffice to construct one pair at every admissible length and distribution. The complete $96$-representative catalogue is needed to generate every pair. Section~\ref{sec:all-pair-generation} explains how to use it.

The following corollary uses the divisor bases to prove the implication conjecture of \citet[Conjecture 1]{GK2002}. This reduces questions about which weight distributions are available at a fixed length.
\begin{corollary}\label{cor:main-implications}
At every fixed length $n$, existence for distribution $(7,2)$ implies existence for $(6,3)$, and existence for $(6,3)$ implies existence for $(5,4)$.
\end{corollary}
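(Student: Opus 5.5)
The corollary follows directly from Theorem~\ref{thm:main-spectrum}, because that theorem turns existence at length $n$ into a divisibility condition against the finite divisor bases in~\eqref{eq:divisor-bases}. It is therefore enough to check two finite refinement statements: every member of $D_{72}$ is divisible by some member of $D_{63}$, and every member of $D_{63}$ is divisible by some member of $D_{54}$. Once these hold, divisibility is transitive. If $n$ is divisible by some $d\in D_{72}$ and $d$ is divisible by some $d'\in D_{63}$, then $d'\mid n$. Theorem~\ref{thm:main-spectrum} then gives a $(6,3)$ pair of length $n$. The same reasoning passes from $(6,3)$ to $(5,4)$.

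First I check $D_{72}\to D_{63}$. The elements $10,12,13,15,19,23,25$ of $D_{72}$ lie in $D_{63}$ themselves. The remaining element $14$ is divisible by $7\in D_{63}$.

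Next I check $D_{63}\to D_{54}$. The elements $7,11,12,13,17,19,23,29$ lie in $D_{54}$ themselves. The remaining elements $10,15,25$ are all divisible by $5\in D_{54}$.

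There is no real obstacle here. All the substance, namely the primitive-length bounds and the exhaustive enumeration, sits inside Theorem~\ref{thm:main-spectrum}, which the corollary simply invokes. The only care needed is to apply both directions of that theorem. Necessity turns an assumed $(7,2)$ or $(6,3)$ pair into a divisor of $n$ in the corresponding base. Sufficiency turns the refined divisor back into an actual pair of the weaker distribution.
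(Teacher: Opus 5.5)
Your proposal is correct and is essentially the paper's proof: check that each member of $D_{72}$ is divisible by a member of $D_{63}$ and each member of $D_{63}$ by a member of $D_{54}$, then apply both directions of Theorem~\ref{thm:main-spectrum}. Your explicit case checks ($14$ via $7$; $10,15,25$ via $5$; all other members already lie in the next base) are accurate.
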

\begin{proof}
Every member of $D_{72}$ is divisible by a member of $D_{63}$, and every member of $D_{63}$ is divisible by a member of $D_{54}$. Apply Theorem~\ref{thm:main-spectrum}.
\end{proof}

These implications concern existence at the same length. They do not give an entrywise conversion of an arbitrary pair from one distribution to another.

\section{Preliminaries and primitive contraction}\label{sec:prelim}
Write $C_n=\mathbb Z/n\mathbb Z$ for the cyclic group of positions. For a ternary sequence $A=(a_0,\ldots,a_{n-1})$, its support is $\supp(A)=\{j:a_j\ne0\}$. Its positive and negative describing sets are $A_+=\{j:a_j=1\}$ and $A_-=\{j:a_j=-1\}$, as in \citet{G2010}. Define
\[
 A(x)=\sum_{j=0}^{n-1}a_jx^j,\qquad A^*(x)=A(x^{-1}),\qquad
 P_A(s)=\sum_{j=0}^{n-1}a_ja_{j+s\bmod n}.
\]
Here $P_A$ is the PAF of $A$, and $\wt(A)=|\supp(A)|$. Polynomial multiplication is taken modulo $x^n-1$ in the group ring $\Z[C_n]$; thus exponents are reduced modulo $n$. The operation $*$ replaces each group element by its inverse. The pair $(A,B)$ has total weight nine and zero PAF if
\begin{equation}\label{eq:norm-nine}
 AA^*+BB^*=9\quad\text{in }\Z[C_n]=\Z[x]/(x^n-1).
\end{equation}
This says that $\wt(A)+\wt(B)=9$ and $P_A(s)+P_B(s)=0$ for $1\le s<n$.

Evaluation at one gives $A(1)^2+B(1)^2=9$. Up to component interchange and sign changes, the possible nonzero distributions and sign profiles are
\[
\begin{array}{c|cc}
\text{distribution}&(\# +,\# -)\text{ in }A&(\# +,\# -)\text{ in }B\\\hline
(5,4)&(4,1)&(2,2)\\
(6,3)&(3,3)&(3,0)\\
(7,2)&(5,2)&(1,1).
\end{array}
\]
Together with $(9,0)$, these are all possibilities~\citep{GK2002,G2010}.

The sequence operations in Definition~\ref{def:equivalence} preserve the PAF equations and both component weights.

If $C_A,C_B$ are the circulants whose successive rows are cyclic permutations of $A,B$, equation~\eqref{eq:norm-nine} is equivalent to $C_AC_A^{\mathsf T}+C_BC_B^{\mathsf T}=9I_n$. The standard two-circulant array~\citep[Theorem 4.46]{GS1979}
\begin{equation}\label{eq:weighing-array}
 W=\begin{pmatrix}C_A&C_B\\-C_B^{\mathsf T}&C_A^{\mathsf T}\end{pmatrix}
\end{equation}
is then a weighing matrix of order $2n$ and weight nine. All nonexistence statements in this paper concern the sequence pair, or equivalently this two-circulant construction, rather than arbitrary weighing matrices of order $2n$.
To recover the shortest pair underlying a given pair, one needs only its support differences. The following gcd test decides whether contraction is possible and preserves the PAF equations. Put
\[
 d=\gcd\bigl(n,\{u-v:u,v\in\operatorname{supp}(A)\},
                    \{u-v:u,v\in\operatorname{supp}(B)\}\bigr).
\]
Differences are represented by integers; their choice modulo $n$ does not change the greatest common divisor (gcd). Empty collections contribute nothing.
\begin{proposition}\label{prop:gcd-contraction}
After independent cyclic permutations of its nonzero components, a pair
is obtained by uniform zero insertion with factor $t>1$ if and only if
$t\mid d$. Its maximal contraction has length $n/d$, is primitive with
respect to zero insertion, and is unique up to independent cyclic
permutations. Complementarity, total weight, and the component
weights are preserved by both contraction and inflation.
\end{proposition}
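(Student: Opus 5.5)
Since the statement is essentially a translation between supports and subgroups, I would argue directly in $\Z[C_n]$.

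\emph{Characterising inflations.} Write $U=\supp(A)$, $V=\supp(B)$. A sequence of length $n$ is a uniform zero insertion with factor $t$ of a sequence of length $n/t$ exactly when $t\mid n$ and its support lies in the subgroup $tC_n=\{0,t,2t,\dots\}$. The inflation is then $A(x)=A'(x^t)$, where $A'$ is the shorter sequence, read off from positions $0,t,2t,\dots$.

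\emph{The gcd criterion.} Let $t>1$ with $t\mid n$.
\begin{itemize}
\item If independent shifts $A\mapsto x^{-a}A$ and $B\mapsto x^{-b}B$ put both supports into $tC_n$, then every difference $u-v$ within $U$, or within $V$, is unchanged by the shift and is divisible by $t$. Hence $t\mid d$.
\item Conversely, suppose $t\mid d$. Then $t\mid n$. Choose $a\in U$ and $b\in V$, if nonempty, and shift by $-a$ and $-b$. Every support element then equals a difference $u-a$ or $v-b$ modulo $n$, and so lies in $tC_n$. The empty case is trivial.
\end{itemize}
The representatives of differences modulo $n$ do not matter, because $n$ is included in the gcd.

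\emph{Maximal contraction and uniqueness.} Taking $t=d$ gives a contraction of length $n/d$. After the shifts, the contracted pair has support differences equal to the original ones divided by $d$, so its own gcd is
\[
\gcd\bigl(n/d,\{(u-v)/d\}\bigr)=d/d=1.
\]
By the criterion just proved, it admits no further contraction, so it is primitive.

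For uniqueness, the contraction is determined by the choice of base points $a,b$. Another choice $a'\in U$ changes the shift by $a'-a\in dC_n$. This amounts to a cyclic permutation of the contracted component by $(a'-a)/d$ modulo $n/d$, and similarly for $B$. Any other shift that places the supports in $dC_n$ must itself differ from $-a$ by a multiple of $d$ modulo $n$, so again it yields a cyclic permutation of the contraction.

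\emph{Preservation of the properties.} The map $\phi\colon\Z[C_{n/t}]\to\Z[C_n]$ given by $x\mapsto x^t$ is a well-defined injective ring homomorphism, since $(x^t)^{n/t}=1$. It commutes with $*$, sends $9$ to $9$, and is injective because distinct exponents $jt$ stay distinct modulo $n$. Therefore
\[
A'A'^*+B'B'^*=9 \iff \phi(A')\phi(A')^*+\phi(B')\phi(B')^*=9.
\]
Cyclic shifts also preserve the equation, because $x^cx^{-c}=1$. Weights are preserved, since $\phi$ only relocates coefficients.

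\emph{Main obstacle.} The main subtlety is the precise uniqueness claim. One must check that the choice of base points, and hence of admissible shifts, only changes the contraction by cyclic permutations modulo $n/d$, which is the coset argument above. The degenerate case $B=0$ also needs attention: there the gcd involves only $n$ and the differences within $U$.
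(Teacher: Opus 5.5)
Your proof is correct and follows the paper's argument. Both normalise a reference support position in each component to zero and note that the supports lie in the multiples of $t$ exactly when $t$ divides $n$ and every within-component difference. Both then contract by $d$ and transfer complementarity both ways through the injective group-ring embedding $x\mapsto x^t$. Your coset argument for uniqueness is slightly more explicit than the paper's remark that changing the reference position only cyclically permutes the contraction: any shift placing a support in the multiples of $d$ differs from the reference shift by a multiple of $d$.
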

\begin{proof}
Choose one reference support position in each nonzero component and cyclically permute that component so that the chosen position becomes zero.
All support positions are divisible by $t\mid n$ precisely when every
within-component difference is divisible by $t$. This is equivalent to
$t\mid d$. Divide the resulting indices by $d$ to obtain sequences on
$C_{n/d}$. The subgroup embedding $C_{n/d}\longrightarrow C_n$,
$j\mapsto dj$, induces an injective homomorphism of integral group rings,
so the complementarity identity is preserved in both directions. Choosing a different reference position
only cyclically permutes the contracted component. The gcd of the contracted
support differences and $n/d$ is one, proving primitiveness and maximality.
\end{proof}

\section{Rigidity in a cyclic group with a free coordinate}
To bound primitive lengths, we must rule out movement of support positions that preserves all matched differences. We represent this movement by an additional unrestricted integer coordinate. The result of this section forces that coordinate to be constant within each component.

Let $H$ be a finite cyclic group, written multiplicatively. We consider a ternary element $A\in\Z[H\times\Z]$. Write $A=\sum_k A_kz^k$, where $A_k\in\Z[H]$ collects the terms whose additional integer coordinate is $k$, with that coordinate suppressed. Only finitely many $A_k$ are nonzero. The integer $k$ is the \emph{free coordinate}. Each $A_k$ is the group-ring coefficient of $z^k$ and records the positions in $H$ and the signs of the entries at that coordinate. We use the analogous notation $B=\sum_k B_kz^k$. The operation $*$ inverts both coordinates, and all integer coefficients are $0,\pm1$.

For a ternary $C\in\Z[H]$, its weight counts its nonzero integer coefficients, and its \emph{augmentation} $C(1)$ is their sum. Multiplication by an element of $H$ cyclically permutes these coefficients. We call $C$ \emph{balanced} if $C(1)=0$, meaning that it has equal numbers of positive and negative entries. Augmentation over $H$ gives $\overline A(z)=\sum_k A_k(1)z^k$ and $\overline B(z)=\sum_k B_k(1)z^k$.

The next lemma restricts the coefficient sums $A_k(1)$ and $B_k(1)$. Since the total weight is nine, only a few possibilities remain for the numbers of positive and negative entries in the nonzero $A_k$ and $B_k$.

\begin{lemma}\label{lem:augmentation-free}
If integer Laurent polynomials $F,G$ satisfy $FF^*+GG^*=9$, one is zero and the other is $\pm3z^a$. Consequently, if ternary group-ring elements $A,B$ have total weight nine and $AA^*+BB^*=9$, then, after sign changes, interchange, and independent shifts in the free coordinate, their augmentations over $H$ are $\overline A(z)=3$ and $\overline B(z)=0$.
\end{lemma}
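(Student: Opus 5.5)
The plan is to treat the first claim as a statement about integer Laurent polynomials, where $\Z[z,z^{-1}]$ is an integral domain with units exactly $\pm z^a$, and then to obtain the second claim by applying augmentation over $H$.

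\emph{Step 1 (the Laurent identity).} Write $F=\sum_k f_kz^k$ and $G=\sum_k g_kz^k$ with integer coefficients. Compare the coefficient of $z^0$ in $FF^*+GG^*=9$ to get
\[
\sum_k f_k^2+\sum_k g_k^2=9.
\]
Next compare extreme coefficients. Let $N$ be the largest value of $k-l$ with $k,l$ both in the support of $F$ or both in the support of $G$. Suppose $N>0$. The coefficient of $z^N$ in $FF^*$ is $f_{\max}f_{\min}$, taken over the extreme exponents of $F$, provided $F$ attains span $N$; otherwise it is zero. The same holds for $G$.

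\emph{Step 2 (closing the case $N>0$).} I expect this to be the delicate point. The two extreme contributions can cancel, so the extreme-coefficient argument alone does not finish. I would therefore instead evaluate on the unit circle. There $|F(z)|^2+|G(z)|^2=9$ for all $|z|=1$.

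Integrating gives the sum of squares above. The key extra input is integrality together with the two-squares constraint. I would work in the Gaussian-integer Laurent ring via $P=F+iG$, so that
\[
PP^\dagger=FF^*+GG^*=9,
\]
where $\dagger$ conjugates the coefficients and inverts $z$. Then $P$ divides $9$ in $\Z[i][z,z^{-1}]$, which is a UFD. The factorisation of $9$ there is $3\cdot3$, since $3$ is a Gaussian prime. Hence $P=\varepsilon z^a$ or $P=3\varepsilon z^a$ with $\varepsilon$ a unit of $\Z[i]$.

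Checking $PP^\dagger=9$ forces $P=3\varepsilon z^a$, as $|\varepsilon|^2=1$. Hence $\{F,G\}=\{\pm3z^a,0\}$. This UFD route replaces the delicate comparison of extreme coefficients.

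\emph{Step 3 (consequence for $A,B$).} Apply the augmentation homomorphism $\Z[H\times\Z]\to\Z[z,z^{-1}]$, which sends $H$ to $1$. It commutes with $*$, so $\overline A\,\overline A^*+\overline B\,\overline B^*=9$. By Steps 1--2, one augmentation is $\pm3z^a$ and the other is $0$.

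Now interchange $A$ and $B$ if necessary, change sign, and multiply by $z^{-a}$. The last operation is a shift in the free coordinate, and it preserves both the identity and ternarity. After these operations $\overline A=3$ and $\overline B=0$.
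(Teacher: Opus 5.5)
Your Step 2 rests on a false identity, so the proposal has a genuine gap. Since $F,G$ have integer coefficients, $P^\dagger=F^*-iG^*$, and
\[
PP^\dagger=(F+iG)(F^*-iG^*)=FF^*+GG^*+i\,(GF^*-FG^*).
\]
The cross term vanishes only when $GF^*$ is fixed by $*$, for instance when $F,G$ are constants. In general it is nonzero; for $F=1$, $G=z$ it equals $i(z-z^{-1})$. The factorisation $a^2+b^2=(a+bi)(a-bi)$ holds for integers, but it does not carry over to Laurent polynomials with the involution $*$, because $F(z),G(z)$ are not real on the unit circle. So you cannot conclude that $P$ divides $9$, and the UFD argument never gets started.

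A sanity check shows this route cannot be patched as it stands. Replace $9$ by $4=-(1+i)^4$ and run the same reasoning. It would ``prove'' that $FF^*+GG^*=4$ forces $\{F,G\}=\{\pm2z^a,0\}$. Yet the Golay pair $F=1+z$, $G=1-z$ satisfies $FF^*+GG^*=4$. Your Step 1 observation, that the constant coefficient gives $\sum f_k^2+\sum g_k^2=9$, is correct. On its own it is far from sufficient, since it allows many coefficient patterns.

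The missing idea is a proof that $3$ divides both $F$ and $G$. The paper obtains it by reducing modulo $3$, which gives $\bar F\bar F^*+\bar G\bar G^*=0$ in $\mathbb F_3[z^{\pm1}]$.
\begin{itemize}
\item Suppose the reductions are not both zero, and let $r$ be the smaller order of vanishing at $z=1$.
\item Write $z=1+t$ and use $z^{-1}-1=-t+O(t^2)$. The coefficient of $t^{2r}$ is then $(-1)^r(\alpha^2+\beta^2)$, where $(\alpha,\beta)\neq(0,0)$ are the coefficients of $t^r$.
\item This is nonzero in $\mathbb F_3$ because $-1$ is not a square modulo $3$, a contradiction.
\end{itemize}
Hence $F=3F_0$ and $G=3G_0$ with $F_0F_0^*+G_0G_0^*=1$. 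Now your constant-coefficient argument finishes the proof: the squares of all coefficients sum to $1$, so exactly one coefficient is nonzero and equals $\pm1$.

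Your instinct that the inertness of $3$ in $\Z[i]$ is the key arithmetic fact is right. It must be used in the residue field $\mathbb F_3$ through this vanishing-order argument, not through a factorisation of $FF^*+GG^*$. Your Step 3, applying augmentation, using that it commutes with $*$, and normalising by interchange, sign and a shift in $z$, is correct. It is exactly the deduction the paper leaves implicit for the second sentence of the lemma.
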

\begin{proof}
Reduce the integer identity modulo three, in the field $\mathbb F_3$ of three elements. If a reduction is nonzero, let $r$ be the smaller order of vanishing of the two reductions at $z=1$, assigning infinite order to the zero polynomial. Write $z=1+t$, and let $a,b$ be the coefficients at order $r$; at least one is nonzero. Since $(1+t)^{-1}-1=-t+O(t^2)$, the coefficient of $t^{2r}$ in the norm sum is $(-1)^r(a^2+b^2)\ne0$ in $\mathbb F_3$, contradicting its zero right-hand side. Thus both reductions vanish. Write $F=3F_0,G=3G_0$; then
$F_0F_0^*+G_0G_0^*=1$. Its constant coefficient is a sum of integer
squares, so exactly one coefficient of $F_0,G_0$ is nonzero, and equals
$1$ or $-1$.
\end{proof}

Two elementary obstructions will restrict cancellation between terms at different free coordinates. A character of $H$ is a homomorphism into the complex numbers of absolute value one. It is \emph{faithful} if different group elements have different character values, that is, if the character is injective.

\begin{lemma}\label{lem:binomial-facts}
Two nonzero binomials of the form $u-v$ in $\mathbb Z[H]$ have nonzero
product. Moreover, if $C(1-h)=0$, then multiplication by $h$ leaves $C$
unchanged. Consequently, the order of $h$ divides the number of positive
coefficients and the number of negative coefficients of a ternary $C$.
\end{lemma}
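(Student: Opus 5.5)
The plan has three parts, taken in the order of the statement: the product claim, then the invariance claim, then the counting consequence.

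\emph{Product of two binomials.} Take nonzero binomials $u-v$ and $u'-v'$ with $u,v,u',v'\in H$; since they are nonzero, $u\ne v$ and $u'\ne v'$. Their product is
\[
(u-v)(u'-v')=uu'-uv'-vu'+vv'.
\]
It can vanish only if the two positive terms $uu',vv'$ cancel against the two negative terms $uv',vu'$ as a multiset. There are two ways this could happen.
\begin{itemize}
\item[(a)] If $uu'=uv'$, then $u'=v'$, which is a contradiction.
\item[(b)] If $uu'=vu'$, then $u=v$, which is also a contradiction.
\end{itemize}
Hence no cancellation is possible and the product is nonzero. This is a direct check of group elements and needs no characters.

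\emph{Invariance under $h$.} From $C(1-h)=0$ we get $C=Ch$. Multiplying by the group element $h$ permutes coefficients: the coefficient of $g$ in $Ch$ is the coefficient of $gh^{-1}$ in $C$. So $c_g=c_{gh^{-1}}$ for every $g\in H$, and the coefficient function of $C$ is constant on the cosets of $\langle h\rangle$.

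\emph{Divisibility.} For a ternary $C$, the set of positions with coefficient $+1$ is therefore a union of cosets of $\langle h\rangle$. Each coset has size equal to the order of $h$, so that order divides the number of positive coefficients. The same argument applies to the set of positions with coefficient $-1$.

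I do not expect a real obstacle. The only point needing care is the multiset cancellation in the first claim, where one must see that the positive terms can only pair with the negative terms and that both pairings are excluded.
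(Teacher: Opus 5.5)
Your proof is correct. The second and third assertions are argued exactly as in the paper: $C=Ch$, so the coefficients are constant on cosets of $\langle h\rangle$, and each sign class is a union of such cosets.

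For the first assertion you take a different route. The paper evaluates at a faithful linear character $\chi$ of the cyclic group $H$. Injectivity gives $\chi(u-v)\ne0$ and $\chi(u'-v')\ne0$, so $\chi$ of the product is nonzero.

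Your multiset-cancellation check is more elementary and more general. It uses only left and right cancellation in the group, so it holds in $\mathbb Z[G]$ for an arbitrary group $G$, even a nonabelian one. By contrast, a finite group has a faithful linear character only when it is cyclic.

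The character argument gives something in return. It shows at once that any product of nonzero binomials is nonzero, which no argument ignoring cyclicity can do: in $\mathbb Z[C_2\times C_2]$ one has $(1-a)(1-b)(1-ab)=0$. It also fits the character computations the paper uses right afterwards, for instance in Lemma~\ref{lem:four-impossible}.

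For the two-factor statement actually needed, your version is the cleaner proof.
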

\begin{proof}
A faithful complex character of the cyclic group $H$ is nonzero on every
nonzero difference $u-v$, proving the first assertion. The second is the
coefficient equality $C=Ch$; nonzero coefficients are constant on the
orbits of the subgroup generated by $h$.
\end{proof}

\begin{lemma}\label{lem:four-impossible}
Let $h\in H$ have order three. There is no ternary $Y\in\mathbb Z[H]$
with two positive and two negative coefficients such that
\[
 YY^*=4-2h-2h^2.
\]
\end{lemma}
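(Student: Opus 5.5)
The plan is to use characters to force $Y$ into a rigid shape, and then to show that the resulting cross term cannot vanish. Throughout, $\langle h\rangle$ denotes the subgroup of order three.

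\emph{Step 1: project to $H/\langle h\rangle$.} Suppose such a $Y$ exists. Let $\chi$ be any character of $H$ with $\chi(h)=1$. Then $|\chi(Y)|^2=\chi(YY^*)=4-2-2=0$, so $\chi(Y)=0$. These characters are exactly the characters of $H/\langle h\rangle$, and they separate elements of $\Z[H/\langle h\rangle]$. Hence the image of $Y$ in $\Z[H/\langle h\rangle]$ is zero: every coset of $\langle h\rangle$ contains equally many positive and negative entries of $Y$. A coset has only three elements, so it cannot hold all four support positions. Therefore the support meets exactly two distinct cosets, each containing one positive and one negative entry. This lets us write
\[
 Y=g_1(1-h^{i})+g_2(1-h^{j}),\qquad i,j\in\{1,2\},\qquad g:=g_1g_2^{-1}\notin\langle h\rangle .
\]

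\emph{Step 2: expand $YY^*$.} For $i\in\{1,2\}$ we have $(1-h^i)(1-h^{-i})=2-h-h^2$. The two diagonal products therefore already give $4-2h-2h^2$. The hypothesis then reduces to $X+X^*=0$, where
\[
 X=g\,P,\qquad P=(1-h^{i})(1-h^{-j})\in\Z[\langle h\rangle].
\]
By Lemma~\ref{lem:binomial-facts}, $P\neq0$, being a product of two nonzero binomials. The support of $X$ lies in the coset $g\langle h\rangle$ and that of $X^*$ lies in $g^{-1}\langle h\rangle$.

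\emph{Step 3: case split on cosets.}
\begin{itemize}
\item If $g\langle h\rangle\neq g^{-1}\langle h\rangle$, the two supports are disjoint. Then $X+X^*=0$ forces $X=0$, hence $P=0$, which is a contradiction.
\item If $g^2=h^k\in\langle h\rangle$, write $g^{-1}=gh^{-k}$. Using $1-h^{-i}=-h^{-i}(1-h^i)$ and $1-h^{j}=-h^{j}(1-h^{-j})$, we get $X^*=g\,h^{m}P$ with $m=-k-i+j$. The condition becomes $(1+h^m)P=0$ in $\Z[\langle h\rangle]$.
\end{itemize}
In the second case, apply the faithful character of $\langle h\rangle$ sending $h\mapsto\omega$, a primitive cube root of unity. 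The value $1+\omega^m$ lies in $\{2,\,1+\omega,\,1+\omega^2\}$, so it is nonzero. The value of $P$ is $(1-\omega^i)(1-\omega^{-j})\neq0$. Hence the product is nonzero, which is again a contradiction.

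The main obstacle is the second case, where the cross term folds back onto its own coset and genuine cancellation is conceivable. The sign rewriting that expresses $X^*$ as $g\,h^mP$ reduces it to a nonvanishing check at a single character. The rest is routine bookkeeping.
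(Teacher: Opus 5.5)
Your proof is correct and follows essentially the paper's route. You show that $Y$ vanishes in $\Z[H/\langle h\rangle]$, write $Y$ as two signed binomials on distinct cosets, split into the cases $g\langle h\rangle\neq g^{-1}\langle h\rangle$ and $g^2\in\langle h\rangle$, and finish with the character $h\mapsto\omega$. The only difference is bookkeeping in the self-inverse case: you rewrite $X^*=g\,h^mP$ and use $1+\omega^m\neq0$, whereas the paper normalises $t^2=1$ and checks that three times a sixth root of unity has nonzero real part.
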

\begin{proof}
Put $K=\langle h\rangle$. Compression to $H/K$, obtained by adding coefficients in each coset of $K$, gives
$\overline Y\,\overline Y^*=0$. Its identity coefficient is a sum of
integer squares, so $\overline Y=0$. A nonempty balanced ternary subset
of a coset of $K$ has exactly two entries. Hence the four entries of $Y$
occupy two distinct cosets, and, after multiplying $Y$ by a suitable element of $H$,
$Y=f+tg$, where $f,g\in\mathbb Z[K]$ are nonzero signed binomials and
$t\notin K$.
If $tK\ne t^{-1}K$, the two cross terms of $YY^*$ occupy distinct
nonidentity quotient cosets and cannot vanish, by
Lemma~\ref{lem:binomial-facts}. If $tK=t^{-1}K$, then $t^2\in K$.
Replacing $t$ by $th^j$ and adjusting $g$, choose $t^2=1$, since
multiplication by two is invertible modulo three. The cross coefficient
in the coset $tK$ is then $fg^*+gf^*$.
At a character with $h\mapsto\omega=e^{2\pi i/3}$, every nonzero
signed binomial in $\mathbb Z[K]$ has value $(1-\omega)$ times a sixth
root of unity. Thus $f(\omega)\overline{g(\omega)}$ is three times a
sixth root of unity, whose real part is never zero. Therefore
$fg^*+gf^*\ne0$, again a contradiction.
\end{proof}

The following result shows that exactly one $A_k$ and exactly one $B_k$ are nonzero. It will force the signed-difference equations in the next section to have full rank.

\begin{theorem}\label{thm:free-rigidity}
Suppose $A,B\in\mathbb Z[H\times\mathbb Z]$ are a ternary complementary
pair of total weight nine, and both components are nonzero. Every support
position of $A$ has the same free coordinate, and the same is true
separately of $B$.
\end{theorem}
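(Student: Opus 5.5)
The plan is to expand the identity $AA^*+BB^*=9$ in powers of the free coordinate $z$ and study the extreme powers of $z$. Write $A=\sum_{k=a}^{b}A_kz^k$ and $B=\sum_{k=c}^{d}B_kz^k$, with $A_a,A_b,B_c,B_d\ne0$. The coefficient of $z^m$ in $AA^*+BB^*$ is
\[
\sum_k A_{k+m}A_k^*+\sum_k B_{k+m}B_k^*,
\]
which must vanish for $m\ne0$. At $m=0$ it equals $\sum_kA_kA_k^*+\sum_kB_kB_k^*=9$, and this is automatic from the total weight. We want to prove $a=b$ and $c=d$.

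First normalize using Lemma~\ref{lem:augmentation-free}. After sign changes, interchange and shifts, exactly one level of one component has augmentation $3$, and every other level of both components is balanced. Every nonzero balanced level has even weight at least $2$. The level with augmentation $3$ has weight $3$, $5$ or $7$, with profiles $(3,0)$, $(4,1)$ and $(5,2)$. Since the total weight is nine, only a short list of level decompositions remains. For example, a component of weight $4$ split over two levels is a product of two signed binomials up to shifts, and weight $5$ can split only as $3+2$.

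Next, suppose one spread is strictly larger, say $b-a>d-c$ (or $b-a>0$ and $B$ concentrated at one level). Then the top coefficient gives $A_bA_a^*=0$. When both $A_a$ and $A_b$ are signed binomials, this contradicts the first part of Lemma~\ref{lem:binomial-facts}. Otherwise I will use characters. At a faithful character, and indeed at every character $\chi$, we have $A_b(\chi)\overline{A_a(\chi)}=0$. Taking $\chi$ trivial forces the level of augmentation $3$ to be absent from $\{a,b\}$. A balanced level of weight $\le 6$ is then annihilated by $A_b^*$ only if it has a factor structure of the form $C(1-h)=0$. This is where the divisibility conclusion of Lemma~\ref{lem:binomial-facts} enters: the order of $h$ must divide both sign counts, which are small. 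That leaves only $h$ of order $2$ or $3$, and these few configurations can be checked against the remaining coefficients.

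When the spreads are equal, $b-a=d-c>0$, the top relation $A_bA_a^*+B_dB_c^*=0$ couples the two components. I plan to combine it with the next coefficient, $m=b-a-1$, and with the $z^0$ coefficient restricted to the levels involved. In the critical case one component, of profile $(2,2)$, splits as two binomials $Y_1,Y_2$. Then the cross-level cancellation forces some weight-four balanced element to satisfy $YY^*=4-2h-2h^2$ with $h$ of order three, which Lemma~\ref{lem:four-impossible} forbids. The profiles $(6,3)$ and $(7,2)$, where $B$ has profile $(3,0)$ or $(1,1)$, are simpler: $B$ has at most one balanced binomial level, or it is the augmentation-$3$ level itself.

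I expect the main obstacle to be the equal-spread case in distribution $(5,4)$. There the top coefficients of $A$ and $B$ can genuinely cancel, so I would first enumerate the level splittings of $A$ (profile $(4,1)$) and $B$ (profile $(2,2)$) allowed by the augmentation constraints. For each splitting I would use compression modulo the subgroup generated by the relevant $h$, as in the proof of Lemma~\ref{lem:four-impossible}, to reduce to that lemma or to Lemma~\ref{lem:binomial-facts}.
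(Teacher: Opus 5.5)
Your framework is the paper's: expand $AA^*+BB^*=9$ in powers of $z$, normalise augmentations by Lemma~\ref{lem:augmentation-free}, list the level splittings, and use the vanishing of the extreme cross coefficient. Two steps, however, fail.

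\textbf{The trivial-character step.} The trivial character gives only $A_b(1)A_a(1)=0$, i.e.\ that one extreme level is balanced. This is automatic, because only one level in total has nonzero augmentation. It therefore does not remove the augmentation-three level from $\{a,b\}$. For instance, $P=1+h+h^2$ and $D=1-h$, with $h$ of order three, satisfy $PD^*=0$. The configurations with $P$ at an extreme are exactly where the proof has content, and each needs the $z^0$ coefficient:
\begin{itemize}
\item For $(5,4)$ with only $A$ split, the $z^0$ coefficient gives $BB^*=4-2h-2h^2$, which Lemma~\ref{lem:four-impossible} excludes.
\item For $(7,2)$ with levels $3+2+2$, it forces every binomial difference into $\{h,h^2\}$. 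The product of the two binomial levels of $A$ is then nonzero and uncancelled.
\item For $(7,2)$ with levels $3+4$, the relation $PC^*=0$ (with $C$ balanced of weight four) is not of the form $C(1-h)=0$ at all. You need that $1,\chi(x^a),\chi(x^b)$ are the three cube roots of unity whenever $\chi(C)\neq0$. Then $C$ is invariant under $L=\langle x^{3a},x^{a+b}\rangle$, and the coefficient at an involution excludes $|L|=2$. Finally, compression modulo $\langle h\rangle$ puts the difference of $B$ in $\langle h\rangle$ and leaves $CC^*=4-2h-2h^2$.
\end{itemize}

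\textbf{The equal-spread $(5,4)$ case.} Here $A$ sits on levels $P,D$ and $B$ on levels $E,F$, with the same gap $g$, and your plan has no working mechanism. Only the $z$-exponents $0,\pm g$ occur, so the coefficient at $m=g-1$ adds nothing beyond the $z^0$ coefficient. Also, $EF^*\neq0$ by Lemma~\ref{lem:binomial-facts}, so the top relation gives $PD^*\neq0$. The route through $PD^*=0$ and Lemma~\ref{lem:four-impossible} that works in the other cases is therefore unavailable.

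The missing idea runs as follows. The $z^0$ identity $PP^*+DD^*+EE^*+FF^*=9$ matches the three differences of $P$ with the differences of $D,E,F$. This normalises the top relation to $(1+x^a+x^b)(1-x^a)=\varepsilon x^q(1-x^b)(1-x^{b-a})$. The first moment $\mu(UV)=U(1)\mu(V)+V(1)\mu(U)$ then gives $3a=0$. Passing to $H/\langle a\rangle$ gives $(1-y)^2=0$, hence $b=2a$. Now the left side vanishes while the right side is a nonzero product of binomials, a contradiction.

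Separately, a weight-four component on two levels is not in general a product of two binomials. All you need there is $EF^*\neq0$.
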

\begin{proof}
Normalise the row sums, which are the sums of the sequence entries, as in Lemma~\ref{lem:augmentation-free}.
The component of row sum three has exactly one group-ring coefficient of augmentation
three. All other group-ring coefficients in both components have augmentation
zero. Every nonzero ternary element of augmentation zero has even weight,
at least two.

\medskip\noindent\textbf{Distribution $(5,4)$.}
Write $A$ for the row-sum-three component. Either exactly one $A_k$ is nonzero,
of weight five, or the nonzero $A_k$ are a positive three-term element $P$
and a balanced binomial $D$. Similarly, the nonzero $B_k$ are either one
four-term element or two balanced binomials $E,F$.
The extreme indices of a component are the smallest and largest $k$ for which its group-ring coefficient is nonzero. If only $B$ has two nonzero $B_k$, their nonzero cross product contradicts
complementarity. If only $A$ has two nonzero $A_k$, then $PD^*=0$. By
Lemma~\ref{lem:binomial-facts}, the difference $h$ in $D$ has order three
and, after multiplication by suitable elements of $H$ and sign changes, $P=1+h+h^2$, $D=1-h$.
The coefficient of $z^0$ of the complementarity identity would then
require $BB^*=4-2h-2h^2$, contradicting
Lemma~\ref{lem:four-impossible}.

It remains to consider two nonzero $A_k$ and two nonzero $B_k$. The product $EF^*$
is nonzero, so $PD^*$ is also nonzero, and the two free-coordinate gaps
must agree up to sign. The constant coefficient gives
\[
 PP^*+DD^*+EE^*+FF^*=9.
\]
Consequently the three unordered differences of the three positive
positions of $P$ are exactly the three binomial differences of $D,E,F$,
counted with multiplicity modulo inversion. This conclusion also holds
for an element of order two: its contribution from a pair of distinct positions is twice the
corresponding group element.
Choose a generator $x$ of $H$, with exponents taken modulo $|H|$. Cyclically permute the coefficients of $P$ and label its support positions so that $P=1+x^a+x^b$, the difference
in $D$ is $a$, and the other two differences are $b$ and $b-a$.
The vanishing cross coefficient, after choosing orientations and absorbing signs and powers of $x$, becomes
\begin{equation}\label{eq:triangle-binomials}
 (1+x^a+x^b)(1-x^a)
   =\varepsilon x^q(1-x^b)(1-x^{b-a}),\qquad \varepsilon\in\{1,-1\}.
\end{equation}
Here $a,b$ are nonzero and distinct in the cyclic group $H$.
The group-valued first moment $\mu(\sum c_jx^j)=\sum c_jj$ satisfies
$\mu(UV)=U(1)\mu(V)+V(1)\mu(U)$.
Applying it to~\eqref{eq:triangle-binomials} gives $3a=0$.
Since $a\ne0$, it has order three. Passing to $H/\langle a\rangle$
in~\eqref{eq:triangle-binomials} gives $(1-y)^2=0$, where $y$ is the
image of $x^b$. Evaluation at all complex characters forces $y=1$.
Thus $b\in\langle a\rangle$, and distinctness forces $b=2a$.
The left side of~\eqref{eq:triangle-binomials} is now zero, whereas its
right side is a product of two nonzero binomials. This is impossible.

\medskip\noindent\textbf{Distribution $(6,3)$.}
Exactly one $B_k$ is nonzero and has three positive entries. Every $A_k$
has coefficient sum zero. If more than one $A_k$ is nonzero, their weights are either
$2+2+2$ or $2+4$. In the case $2+2+2$, the $A_k$ at the extreme indices are binomials and
have nonzero cross product, contradicting complementarity.
For $2+4$, write the two nonzero $A_k$ as $D=1-h$ and $C$, after multiplying the binomial by a suitable element of $H$
and changing its sign. Their cross product vanishes, so $C$ is invariant under
$h$. Its two positive and two negative entries force $h$ to have order
two. At the nonidentity shift $h$, the coefficient of $CC^*$ is four,
and that of $DD^*$ is minus two. The component $B$ has only positive
entries, so its autocorrelation coefficient at $h$ is nonnegative. The
summed coefficient is therefore at least two, a contradiction.

\medskip\noindent\textbf{Distribution $(7,2)$.}
Exactly one $B_k$ is nonzero, and it is a balanced binomial. If more than one
$A_k$ is nonzero, their weights are $5+2$, $3+2+2$, or $3+4$.
For $5+2$, the vanishing cross product makes the five-term $A_k$ invariant under
the nonzero difference in the binomial. Its augmentation is three,
so the order of that difference divides both five and three, which is
impossible.
For $3+2+2$, if the $A_k$ at both extreme indices are binomials, their cross product is
nonzero, a contradiction. Otherwise the positive three-term element $P$
occurs at an extreme index. If $D$ is the binomial at the opposite extreme index, then $PD^*=0$. Thus
$P=1+h+h^2$ after multiplication by a suitable element of $H$, with $h$ of order three.
The coefficient of $z^0$ shows that all three binomial differences
(from the other two nonzero $A_k$ and the nonzero $B_k$) are $h$ or $h^2$, because they must
match the three differences of $P$. Both products with $P$ vanish.
However, the cross product between the two binomial $A_k$ is nonzero
and has no remaining term to cancel it.

It remains to consider $3+4$. Write the two nonzero $A_k$ as a positive three-term
element $P=1+x^a+x^b$ and a balanced four-term element $C$. They satisfy $PC^*=0$.
For every complex character $\chi$ with $\chi(C)\ne0$, the three unit
complex numbers $1,\chi(x^a),\chi(x^b)$ sum to zero. They are therefore
the three cube roots of unity. In particular
$\chi(x^{3a})=\chi(x^{a+b})=1$.
Put $L=\langle x^{3a},x^{a+b}\rangle$. For each $g\in L$, every character vanishes on $(g-1)C$. Characters distinguish elements of the complex group algebra, so $(g-1)C=0$. Thus $C$ is invariant under $L$. Its two positive and two negative entries imply
$|L|\mid2$. If $|L|=2$, its nonidentity element $u$ has coefficient four
in $CC^*$. The coefficient in $PP^*$ is nonnegative and that in $BB^*$
is at least minus two, contradicting the coefficient of $z^0$ in the complementarity identity.
Thus $L$ is trivial. Since $0,a,b$ are distinct, $x^a=h$ has order three,
$b=-a$, and $P=1+h+h^2$.
Compressing the coefficient of $z^0$ in the complementarity identity to $H/\langle h\rangle$
gives $\overline C\,\overline C^*+\overline B\,\overline B^*=0$.
Its identity coefficient forces both compressed elements to vanish.
Hence the difference in $B$ is $h$ or $h^2$. The uncompressed identity
now gives $CC^*=4-2h-2h^2$, contrary to
Lemma~\ref{lem:four-impossible}. Hence exactly one $A_k$ and exactly one $B_k$ are nonzero, as required.
\end{proof}

\section{Full rank of every valid matching system}
Each cancellation of signed differences gives an integer equation in the support positions. We will bound determinants of these equations to bound the primitive length. Label the nine nonzero entries separately in their two components, even when their cyclic positions coincide. An \emph{edge} is an unordered pair of distinct labelled entries in one component; its sign is their product. Choosing a direction orders its two entries. For the standard basis rows $e_i$, the \emph{incidence row} of the directed edge $i\to j$ is $e_i-e_j$. A directed incidence matrix has these rows and one column for each labelled entry. A \emph{minor of order $r$} is the determinant of a square submatrix obtained by selecting $r$ rows and $r$ columns.
\begin{lemma}\label{lem:incidence-bound}
Every square submatrix of a directed incidence matrix has determinant $0$, $1$, or $-1$.
Consequently, every $r\times r$ submatrix of a matrix whose rows are differences of two incidence rows has determinant of absolute value at most $2^r$.
\end{lemma}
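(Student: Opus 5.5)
The first assertion is the classical total unimodularity of the directed incidence matrix of a graph, and I will prove it by induction on the order $r$ of the square submatrix $M$. Every row of $M$ is the restriction of some $e_i-e_j$ to the chosen columns. Such a row therefore has at most one entry $+1$, at most one entry $-1$, and zeros elsewhere. I will split into three cases.

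\begin{itemize}
\item[(i)] If some row of $M$ is zero, then $\det M=0$.
\item[(ii)] If some row has exactly one nonzero entry, which is $\pm1$, I expand along that row. This gives $\det M=\pm\det M'$ with $M'$ a square submatrix of order $r-1$, and induction applies.
\item[(iii)] Otherwise every row of $M$ contains both a $+1$ and a $-1$. The sum of all columns of $M$ is then the zero vector, so the columns are dependent and $\det M=0$.
\end{itemize}

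The base case $r=1$ is immediate, since every entry is $0$ or $\pm1$.

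For the second assertion, let $N$ be an $r\times r$ submatrix of a matrix whose rows have the form $R_\ell=p_\ell-q_\ell$, with $p_\ell,q_\ell$ incidence rows. Restricting to the selected columns, each row of $N$ is $p'_\ell-q'_\ell$, where $p'_\ell,q'_\ell$ are the restrictions of $p_\ell,q_\ell$. By multilinearity of the determinant in the rows, I expand
\[
 \det N=\sum_{S\subseteq\{1,\dots,r\}}(-1)^{r-|S|}\det N_S .
\]
Here $N_S$ has row $p'_\ell$ for $\ell\in S$ and row $q'_\ell$ for $\ell\notin S$. Each $N_S$ is a square submatrix of a directed incidence matrix, namely the one whose rows are the chosen incidence rows. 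Rows may repeat, but then $\det N_S=0$, which is still within the bound. By the first assertion $|\det N_S|\le1$. Summing over the $2^r$ subsets $S$ gives $|\det N|\le2^r$.

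The main point to handle carefully is the case analysis in the induction, in particular case (iii). Note that a row of the form $e_i-e_i$ cannot arise, because edges join distinct labelled entries. Even if such a row did arise, it would be zero and fall under case (i). No other obstacle is expected.
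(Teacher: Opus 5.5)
Your proposal is correct and matches the paper's proof essentially step for step. The paper uses the same three-case induction: a zero row; a row with a single $\pm1$, expanded by cofactors; otherwise every row sums to zero, so the matrix is singular. It then uses the same multilinear expansion of each row $p'_\ell-q'_\ell$ into $2^r$ determinants of restricted incidence rows, each of absolute value at most one.
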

\begin{proof}
Let $Q$ be a square submatrix of a directed incidence matrix. If a row is zero, then $\det Q=0$. If a row has one nonzero entry, that entry is $1$ or $-1$, and expansion along the row reduces the claim to a smaller square submatrix. Induction proves the assertion in this case. Otherwise, every row contains one $1$ and one $-1$, so every row sums to zero and $Q$ is singular. For the second assertion, multilinearity expands the determinant into $2^r$ signed terms. Each is the determinant of incidence rows restricted to the selected columns and has absolute value at most one by the same argument.
\end{proof}

\begin{lemma}\label{lem:unordered-all-lengths}
For a periodically complementary ternary pair of total weight nine, positive and negative unordered support differences can be matched modulo inversion at any length. Each match can be oriented as an equality of directed differences.
\end{lemma}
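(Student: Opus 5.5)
The plan is to read the statement off the coefficient expansion of the defining identity~\eqref{eq:norm-nine}. For each nonzero shift $g\in C_n$, the coefficient of $x^g$ in $AA^*+BB^*$ is zero. Expanding, this coefficient is a signed count over ordered pairs of distinct labelled entries $(i,j)$ in the same component with $p_i-p_j=g$, each pair weighted by its sign $a_ia_j$ (or $b_ib_j$). Each unordered edge $\{i,j\}$ contributes to the two shifts $g$ and $-g$, which is the reason for working modulo inversion. So I would group the edges by the class $\{g,-g\}$ of their difference, and treat separately the case in which this difference is zero modulo $n$. The latter arises because labelled entries in different components may coincide, but within one component two distinct labelled entries occupy distinct positions.

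First step: verify that all within-component differences are nonzero, so every edge lands on a nonidentity shift class. Second: fix a class $\{g,-g\}$ with $g\ne -g$. The coefficient at $x^g$ equals the number of positive edges in that class minus the number of negative ones. Here each edge is counted once, via the orientation that realises $g$. Vanishing therefore gives equal counts, and an arbitrary bijection between positive and negative edges of the class gives the matching. Third: handle the involution case $g=-g$ (that is, $2g=0$). Each such edge realises $g$ in both orientations, so it contributes $2a_ia_j$ to the coefficient of $x^g$. Dividing by two again yields equal numbers of positive and negative edges, mirroring the remark in the proof of Theorem~\ref{thm:free-rigidity} about elements of order two.

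Finally, orient each matched pair. Given a positive edge and a negative edge with differences in the same class $\{g,-g\}$, choose directions $i\to j$ and $k\to l$ so that $p_i-p_j=g=p_k-p_l$ in $C_n$. Whenever a difference is $-g$, reversing the edge direction makes this possible. This gives the equality of directed differences, which is the integer equation $p_i-p_j-p_k+p_l\equiv 0\pmod n$, with row a difference of two incidence rows as used in Lemma~\ref{lem:incidence-bound}. The phrase ``at any length'' is automatic: nothing in the argument depends on $n$ beyond the group-ring identity.

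The only delicate point is the bookkeeping for order-two shifts. There, careless counting would suggest a factor-two discrepancy, and one must check that the coefficient is exactly twice the signed edge count. Everything else is a direct reading of coefficients.
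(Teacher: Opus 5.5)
Your proposal is correct and follows essentially the same route as the paper. You read the zero-PAF identity coefficientwise, note that each unordered within-component pair contributes its sign product once at each of $d$ and $-d$ (or twice at the single shift $d=n/2$ when $d=-d$), deduce equal numbers of positive and negative pairs in every inversion class, and then orient the matched pairs so that their directed differences agree.
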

\begin{proof}
An unordered pair of distinct support positions $i,j$ in one component contributes its sign product at both differences $d=j-i$ and $-d$, taken modulo $n$. These differences are distinct unless $n$ is even and $d=n/2$. At that shift the two contributions coincide, so each pair is counted twice. The zero-PAF condition therefore gives equal numbers of positive and negative pairs with each difference modulo inversion, also at $n/2$. Match pairs of opposite signs and choose their directions so that the directed differences agree.
\end{proof}

Assume both components are nonzero. Choose one reference support position in each component and cyclically permute that component to put the chosen position at zero. The other seven positions form a vector $v\in(\Z/n\Z)^7$. For each matching in Lemma~\ref{lem:unordered-all-lengths}, write the equality of directed differences as an integer homogeneous equation. The resulting matrix $M$ has rows that are differences of two incidence rows, with the columns of the two reference positions removed, and $Mv=0\pmod n$. We call a matching valid when it arises from a complementary pair.

Full rank is the link between the preceding rigidity result and a length bound. Here rank is taken over $\Q$, the field of rational numbers. Rank seven means that the seven columns of $M$ are linearly independent over $\Q$. For a primitive pair, the length must divide a nonzero determinant of these equations.

\begin{theorem}\label{thm:matching-full-rank}
Every such valid matching matrix has rank seven over $\Q$. If the pair is primitive, its length divides every nonzero seven-by-seven minor of $M$.
\end{theorem}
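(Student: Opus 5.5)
We argue by contradiction for the rank statement. Suppose $M$ has rank less than seven over $\Q$. Then there is a nonzero rational vector, and after clearing denominators a nonzero integer vector $w\in\Z^7$, with $Mw=0$ over $\Z$. Extend $w$ by zero at the two reference positions, so each labelled entry $i$ carries an integer $w_i$. Use $w_i$ as the free coordinate. Lift the pair to $H\times\Z$ with $H=C_n$: the labelled entry at cyclic position $p_i$ becomes the monomial $x^{p_i}z^{w_i}$ with its original sign, giving ternary elements $\tilde A,\tilde B\in\Z[H\times\Z]$. Two labelled entries may share a cyclic position only if their free coordinates differ. Since they lie in the same component of a ternary sequence, they cannot coincide in $H$, so $\tilde A,\tilde B$ are genuinely ternary.

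\textbf{Complementarity of the lift.} The key claim is that $\tilde A\tilde A^*+\tilde B\tilde B^*=9$. The constant term is the total weight nine. Every other term of the product comes from an ordered pair of distinct labelled entries $(i,j)$ in one component, with group element $x^{p_j-p_i}z^{w_j-w_i}$. By Lemma~\ref{lem:unordered-all-lengths}, the valid matching pairs each edge with an opposite-sign edge whose directed differences agree modulo $n$. The corresponding row of $M$ says that the same integer combination vanishes for $w$, so the free-coordinate differences agree as well. The two ordered terms of each matched edge therefore cancel in $\Z[H\times\Z]$, together with their reversed orientations. Care is needed for the case $d=n/2$: each edge contributes twice there, and the orientation chosen in the matching must be used consistently for both terms. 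Once this is settled, Theorem~\ref{thm:free-rigidity} applies. The free coordinate is constant within each component, and it is zero at the two reference positions, so $w=0$. This contradicts $w\ne0$, and rank seven follows.

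\textbf{Divisibility of minors.} For the second assertion, let $N$ be any $7\times7$ submatrix of $M$ with $\det N=\Delta\ne0$. From $Nv\equiv0\pmod n$, multiplying by the adjugate gives $\Delta v\equiv0\pmod n$. Thus $\Delta p\equiv0\pmod n$ for every support position $p$ after the reference normalisation. Every within-component support difference is then annihilated by $\Delta$ modulo $n$, so $n/\gcd(n,\Delta)$ divides the quantity $d$ of Proposition~\ref{prop:gcd-contraction}. Primitivity means $d=1$, which forces $n\mid\Delta$.

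\textbf{Expected obstacle.} The main difficulty is the lifting step. One must check that the matching, which comes from a cancellation in $\Z[C_n]$, produces exact cancellation in $\Z[H\times\Z]$. This requires that each row of $M$ encodes precisely the equality of directed differences, including orientation. It also requires that no unmatched terms remain, in particular at $n/2$, and at shifts where several edges share a difference and the matching partitions them.
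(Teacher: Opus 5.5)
Your proposal is correct and follows the paper's proof: you lift an integer null vector of $M$ to a free coordinate in $C_n\times\Z$ and contradict Theorem~\ref{thm:free-rigidity}, then use the adjugate argument, and your reduction through $d=1$ is just a rephrasing of the paper's B\'ezout step. The $n/2$ caveat needs no separate treatment: each row forces the two matched edges to have the same lifted directed difference $g$, so their contributions $g+g^{-1}$ cancel whether or not $g$ has order two.
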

\begin{proof}
Suppose its rank over $\Q$ were smaller than seven. A nonzero rational solution of $Mu=0$ can be multiplied by a common denominator to give a nonzero integer vector $u$. Give the two reference entries free coordinate zero and lift each remaining support position to $(v_j,u_j)\in C_n\times\Z$. Positions remain distinct within each component because their first coordinates were distinct. Every matched difference is equal in both coordinates, so the lifted ternary pair still satisfies the complementarity identity of total weight nine. Theorem~\ref{thm:free-rigidity} forces all free coordinates within each component to equal the coordinate of its reference entry, which is zero. This contradicts $u\ne0$.

Now let $Q$ be any nonsingular seven-row submatrix, with determinant $D$. The adjugate identity and $Qv=0\pmod n$ give $Dv_j=0\pmod n$ for every $j$. Choose integer representatives for the $v_j$. Primitiveness gives $\gcd(n,v_1,\ldots,v_7)=1$, by Proposition~\ref{prop:gcd-contraction}. B\'ezout's identity supplies integers $c_0,\ldots,c_7$ such that
\[
 c_0n+\sum_{j=1}^7 c_jv_j=1.
\]
Multiplying by $D$ gives $D=c_0nD+\sum_{j=1}^7c_jDv_j$. Every term on the right is divisible by $n$, so $n\mid D$.
\end{proof}

By Theorem~\ref{thm:matching-full-rank}, the length $n$ of a primitive pair divides a nonzero determinant $D$ of a $7\times7$ submatrix of $M$. Lemma~\ref{lem:incidence-bound} gives $|D|\le2^7=128$, so $n\le128$. The next section uses the sign patterns of the three weight distributions $(5,4)$, $(6,3)$ and $(7,2)$ to improve these bounds to $64$, $48$ and $64$, respectively. Consequently, every primitive pair with two nonzero components has length at most $64$.

\section{Explicit primitive-length bounds}
The weight distributions give different patterns of signed differences. We use these patterns to find a nonzero minor of size seven with a smaller determinant bound in each case.
\subsection{Distribution (5,4)}
There are eight positive edges: six on the positive $A$ positions and two joining equal signs in $B$. There are four negative edges in each component. Classify a matching row as $AA$, $BB$, $AB$ or $BA$ by the components of its positive and negative edges. If $\ell=\#BA$, the row counts are
\[
(\#AA,\#BB,\#AB,\#BA)=(4-\ell,2-\ell,2+\ell,\ell),\quad \ell=0,1,2.
\]
Deleting the columns of the two reference positions leaves four $A$ columns and three $B$ columns.
A selected seven-row determinant with $r_A$ internal $AA$ rows, $r_B$ internal $BB$ rows and $c$ crossing rows has at most
\[
2^{r_A+r_B}\binom{c}{4-r_A}
\]
potentially nonzero incidence terms: precisely four chosen incidence rows must lie in $A$ and three in $B$. Each term has absolute value at most one. Deleting one row from the eight gives
\[
\begin{array}{c|ccc}
\ell&\text{delete }AA&\text{delete }BB&\text{delete crossing}\\\hline
0&64&32&64\\
1&48&32&48\\
2&40&\text{not present}&40.
\end{array}
\]
Every seven-row minor is therefore at most $64$ in absolute value. Some is nonzero by Theorem~\ref{thm:matching-full-rank}, so a primitive pair has length at most $64$.

\subsection{Distribution (6,3)}
The six positive edges of $A$ form two triangles, and the three positive edges of $B$ form one triangle. All nine negative edges are in $A$, joining its positive and negative triples.
Choose two matching rows whose $B$ edges form a spanning tree of its triangle. Their two remaining $B$ columns give a determinant $1$ or $-1$. Use those rows to eliminate the $B$ columns from the third such row. The three oriented triangle edges have a relation with coefficients $1$ or $-1$, so the residual row in the five $A$ columns is a signed sum of three negative incidence rows. The other six residual rows are differences of two $A$ incidence rows.

The residual matrix has rank five because the full matrix has rank seven. Select five independent residual rows. If all five are ordinary rows, its determinant has at most $2^5=32$ incidence terms. If the exceptional row occurs, it has at most $3\cdot2^4=48$ terms. Each term has absolute value at most one. Restoring the two unit pivot rows gives a nonzero seven-by-seven minor of the original matrix with the same absolute value: subtracting multiples of pivot rows does not change that determinant. Hence a primitive $(6,3)$ pair has length at most $48$.

\subsection{Distribution (7,2)}
Write $B=1-x^d$. There are eleven positive edges, all in $A$, and eleven negative edges, ten in $A$ and one in $B$. Exactly one row of the matching matrix contains the $d$ column, with coefficient $1$ or $-1$. The other ten rows involve only the six remaining $A$ columns. Full rank implies that six of those ten rows are independent. Select them and the row containing $d$. Expanding in the $d$ column leaves a six-by-six determinant of incidence differences, bounded by $2^6=64$. Thus a primitive $(7,2)$ pair has length at most $64$.

These bounds depend only on the weight distribution. They give finite ranges containing every primitive pair, so exhaustive classification in those ranges settles the problem at all lengths.

\begin{theorem}\label{thm:uniform64}
Every periodic complementary ternary pair of total weight nine with both components nonzero is obtained, up to independent cyclic permutations of the components, by uniform zero insertion from a primitive pair of length at most $64$. More precisely, the bounds for $(5,4),(6,3),(7,2)$ are $64,48,64$, respectively.
\end{theorem}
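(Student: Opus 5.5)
The plan is to assemble Theorem~\ref{thm:uniform64} from results already proved, since the substantive work sits in the preceding sections. Let $(A,B)$ be a complementary ternary pair of length $n$, total weight nine, with both components nonzero. I would first apply Proposition~\ref{prop:gcd-contraction}. After independent cyclic permutations of the components, $(A,B)$ is the uniform zero insertion, with factor $d$, of a pair $(A',B')$ of length $n/d$. This contracted pair is primitive and preserves complementarity, total weight and the individual component weights. In particular the distribution $(r,s)$ is unchanged, and $B'\neq0$ because contraction preserves weights. It therefore suffices to bound the length $m=n/d$ of a primitive pair with both components nonzero, separately for each distribution.

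For the primitive pair $(A',B')$, I would invoke Lemma~\ref{lem:unordered-all-lengths} to obtain a valid matching of positive and negative support differences. This yields the matrix $M$, after normalising one reference support position in each component to zero. By Theorem~\ref{thm:matching-full-rank}, $M$ has rank seven over $\Q$, and $m$ divides every nonzero $7\times7$ minor of $M$. Full rank guarantees that such a nonzero minor exists, so $m\le|D|$ for some nonzero minor $D$. Lemma~\ref{lem:incidence-bound} gives the crude bound $m\le128$.

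The refined bounds come from the case analysis of the preceding section, applied to the distribution of the primitive pair. For $(5,4)$, one row among the eight is deleted to leave a nonsingular seven-row selection; such a selection exists by rank. The term-counting table then shows that every seven-row minor has absolute value at most $64$. For $(6,3)$, two unit pivot rows are used on the triangle of $B$. Elimination leaves a residual rank-five system whose nonzero $5\times5$ minors are bounded by $48$, and this gives a nonzero seven-by-seven minor of the same absolute value. For $(7,2)$, expanding along the unique row containing the $d$ column leaves a $6\times6$ determinant of incidence differences, bounded by $64$. In each case the chosen minor is nonzero, so $m$ divides it, giving $m\le64,48,64$ respectively. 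The uniform statement then follows by taking the maximum, $64$.

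I do not expect a genuine obstacle here. The one point requiring care is the $(6,3)$ elimination: the nonzero minor produced there must truly be a minor of the original $M$, since only then does the divisibility of Theorem~\ref{thm:matching-full-rank} apply to it. This is handled by noting that row operations adding multiples of pivot rows to a selected row preserve the determinant of the selected submatrix. The reduced $5\times5$ determinant therefore equals, up to the unit pivot, a genuine $7\times7$ minor.
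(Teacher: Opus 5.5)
Your proposal is correct and takes essentially the same route as the paper. You contract to a primitive pair of the same distribution via Proposition~\ref{prop:gcd-contraction}, then combine the nonzero-minor divisibility of Theorem~\ref{thm:matching-full-rank} with the distribution-specific determinant bounds $64,48,64$, including the observation that the $(6,3)$ elimination leaves a genuine $7\times7$ minor unchanged.
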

\begin{proof}
Apply Proposition~\ref{prop:gcd-contraction} to remove uniformly inserted zeros after independent cyclic permutations, obtaining a primitive pair. Theorem~\ref{thm:matching-full-rank} and the determinant bounds above then give the stated bounds on its length. Uniform zero insertion recovers the original pair up to independent cyclic permutations.
\end{proof}

\section{Finite classification and proof of the existence theorem}\label{sec:finite}
Theorem~\ref{thm:uniform64} reduces the three nonzero distributions to primitive lengths at most $64$, $48$ and $64$. We enumerate every candidate in those ranges up to the reductions described below. All computations use integer signed-difference counts. The resulting primitive counts reproduce Tables 2--4 of~\citet{G2010}; the enumeration is also supplied independently so that the finite part of the classification can be repeated.

\subsection{Exhaustive candidate generation}\label{sec:enumeration}
For a signed support, define the folded difference vector $c=(c_1,\ldots,c_{\lfloor n/2\rfloor})$ by adding the sign product of each unordered support pair to the coordinate $\min(d,n-d)$, where $d$ is its difference represented in $\{1,\ldots,n-1\}$. When $n$ is even, the entry corresponding to shift $n/2$ equals half the PAF value at that shift. Equality and cancellation of these vectors are therefore equivalent to equality and cancellation of the PAF values at nonzero shifts, including at even lengths. Storing the PAF coefficients directly gives the same test.

For distribution $(5,4)$, cyclically permute $A$ so that its unique negative entry occurs at position zero. Choose its four positive positions from $\{1,\ldots,n-1\}$. Retain one representative under common unit multiplication. Independently normalise $B$ under cyclic permutations, reversal and sign change, and store its folded vector. Match it against the negative folded vector of $A$. Since the independent operations on $B$ preserve its PAF, every equivalence class is represented by this procedure.

For $(6,3)$, cyclically permute the balanced component $A$ so that a positive entry occurs at position zero, and write its other two positive positions as $a,b$. Common unit multiplication can map either nonzero gap to its gcd with $n$. Thus we may require at least one of $a,b$ to be a proper divisor of $n$. Choose the three negative positions from the remaining residues. Independently cyclically permute $B$ to have positive support $\{0,c,d\}$ and match its folded vector with the negative vector of $A$.

For $(7,2)$, cyclically permute $A$ so that one negative entry occurs at position zero, and use a common unit to make its other negative position a proper divisor of $n$. Enumerate every five-element positive support disjoint from the two negative positions. A folded vector can be completed by $B$ exactly when it is zero except for a single entry equal to one. If that entry is at $d$, choose $B=1-x^d$. Reflection of $B$ permits either orientation of the gap.

The unit reductions are exact because the units modulo $n$ act transitively on the residues having the same gcd with $n$. Indeed, write $a=ga_0$, $n=gn_0$, with $\gcd(a_0,n_0)=1$. The inverse of $a_0$ modulo $n_0$ lifts to a unit modulo $n$, and multiplication by that lift sends $a$ to $g$. A unit modulo a divisor of $n$ always has such a lift, by the Chinese remainder theorem.

In every case we retain primitive pairs using Proposition~\ref{prop:gcd-contraction} and remove duplicates by exact canonicalisation under Definition~\ref{def:equivalence}. Canonicalisation considers every common unit multiplier and minimises each component lexicographically under its independent cyclic permutations, reversals and allowed sign changes. Every retained pair is also checked directly against all $n$ PAF equations. The programs neither stop after reaching an expected count nor omit a candidate on the basis of the published results.

The length bounds make this calculation exhaustive for the original unbounded problem. Let $P_{rs}(m)$ denote the number of primitive sequence-equivalence classes of distribution $(r,s)$ and length $m$.

\begin{proposition}\label{prop:finite-enumeration}
For the three distributions with both components nonzero, the exhaustive procedures above give precisely the primitive sequence-equivalence classes listed in Table~\ref{tab:primitive-counts}. There are $59$, $20$ and $12$ classes for $(5,4)$, $(6,3)$ and $(7,2)$, respectively. Every other length within the applicable theoretical bound has zero primitive classes.
\end{proposition}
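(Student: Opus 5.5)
The argument has two halves: completeness of the enumeration, which is theoretical, and correctness of the computer search, which is computational. For completeness, by Theorem~\ref{thm:uniform64} every primitive pair with both components nonzero has length at most $64$, $48$ or $64$ for $(5,4)$, $(6,3)$, $(7,2)$. It therefore suffices to show that, for each $n$ in the applicable range, the procedure of Section~\ref{sec:enumeration} meets every sequence-equivalence class of complementary pairs of length $n$.

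I would verify the normalisations one at a time, case by case.

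\emph{Distribution $(5,4)$.} Use the sign profile from Section~\ref{sec:prelim}. The row-sum-three component $A$ has exactly one negative entry, so a cyclic permutation of $A$ alone puts it at position $0$. The positive positions then form a $4$-subset of $\{1,\ldots,n-1\}$, and all such subsets are enumerated. The component $B$ enters only through its PAF, which is invariant under its independent cyclic permutations, reversal and sign change. Hence the matching $P_B=-P_A$ at nonzero shifts, tested via folded vectors, detects exactly the valid completions.

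\emph{Distribution $(6,3)$.} Put a positive entry of $A$ at $0$. The unit-lifting argument shows that a common multiplier $u$ sends a chosen gap $a$ to $\gcd(a,n)$. This multiplier also acts on $B$, but $B$ is enumerated over all positive supports $\{0,c,d\}$ afterwards, so nothing is lost. The gcd is a proper divisor of $n$, since $a\not\equiv0$.

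\emph{Distribution $(7,2)$.} The same argument applies to the two negative entries of $A$. Here $B=\pm(1-x^d)$ up to cyclic permutation, and its PAF is $-1$ exactly at $\pm d$. So the folded vector of $A$ must be a single unit entry, which fixes $d$.

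In every case I would also note that folded vectors faithfully encode the PAF, including at shift $n/2$. This holds by the halving remark there, and also by Lemma~\ref{lem:unordered-all-lengths}.

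Next come primitiveness and deduplication. Primitiveness is tested by the gcd criterion of Proposition~\ref{prop:gcd-contraction}, which is exact. Deduplication uses a canonical form: the lexicographic minimum over the whole finite group generated by units, independent cyclic permutations, reversals and sign changes. Two pairs are equivalent if and only if their canonical forms coincide, so the surviving set contains exactly one representative per class. The direct PAF check guards against false positives.

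The remaining content is the execution of these finite searches, reporting counts $P_{rs}(m)$ that agree with Table~\ref{tab:primitive-counts}. The totals are $59$, $20$ and $12$ classes, with zero classes elsewhere up to the bounds. As an independent cross-check, these counts agree with Tables 2--4 of \citet{G2010} in the ranges covered there.

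The main obstacle is not a mathematical step. It is certifying that the search is genuinely exhaustive and reproducible. The delicate point is the interaction between the unit normalisation applied to one component and the freedom left for the other. The multiplier must be applied to the pair jointly, which is why the second component is enumerated fully after normalising the first. I would state this explicitly, and give the search sizes per length, so that the computation can be audited.
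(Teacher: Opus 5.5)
Your proposal is correct and takes essentially the same route as the paper's proof. Both use class-preserving normalisations (the negative entry of $A$ placed at $0$ for $(5,4)$; a unit reduction of one gap to its gcd with $n$ for $(6,3)$ and $(7,2)$, with the other component enumerated in full), exhaustive enumeration with exact folded-vector tests, the gcd primitivity test, exact canonicalisation, and execution of the searches within the bounds of Theorem~\ref{thm:uniform64}, cross-checked against the earlier published tables.

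One small slip: for $(7,2)$ with $d=n/2$, which does occur (for example, the length-$12$ classes), $P_B(n/2)=-2$ rather than $-1$. Your folded-vector formulation, requiring a single folded entry equal to one, already handles this case correctly.
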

\begin{proof}
The normalisations just proved preserve at least one representative of every equivalence class. After normalisation, the programs enumerate every permitted choice of the remaining support positions and test the exact signed-difference equations. Canonicalisation identifies precisely the equivalence relation of Definition~\ref{def:equivalence}. Running these finite procedures for $1\le n\le64$, $6\le n\le48$, and $7\le n\le64$, respectively, gives the counts in Table~\ref{tab:primitive-counts}; smaller lengths for the last two distributions are impossible by the larger component weight. Online Resource~1 (\texttt{ESM\_1.zip}) contains the source code, results for each length and all retained representatives. Its single \texttt{readme.txt} explains every file and how to reproduce this finite calculation. Each count agrees with the published exhaustive data~\citep{G2010} after primitive contraction.
\end{proof}

\begin{table}[htbp]\centering\small
\caption{Primitive class counts $P_{rs}(m)$ under Definition~\ref{def:equivalence}. Unlisted lengths have count zero. The scalar row is established in Proposition~\ref{prop:scalar-five}.}\label{tab:primitive-counts}
\begin{tabular}{clrr}\toprule
Distribution&$(m,P_{rs}(m))$&Total&Maximum $m$\\\midrule
$(5,4)$&\begin{tabular}[c]{@{}l@{}}
$(5,1),(7,1),(8,3),(10,3),(11,5),(12,2),(13,5),$\\
$(14,6),(15,4),(16,2),(17,3),(19,4),(20,5),$\\
$(21,2),(22,3),(23,2),(25,2),(28,3),(29,1),(41,1),(43,1)$
\end{tabular}&59&43\\[4pt]
$(6,3)$&\begin{tabular}[c]{@{}l@{}}
$(7,1),(10,1),(11,1),(12,1),(13,3),(15,3),(17,1),$\\
$(19,3),(20,1),(21,2),(23,1),(25,1),(29,1)$
\end{tabular}&20&29\\[4pt]
$(7,2)$&\begin{tabular}[c]{@{}l@{}}
$(10,1),(12,3),(13,1),(14,1),(15,1),$\\
$(19,1),(23,1),(25,1),(28,2)$
\end{tabular}&12&28\\[4pt]
$(9,0)$&$(13,2),(24,1),(26,2)$&5&26\\\bottomrule
\end{tabular}\end{table}

\begin{proof}[Proof of Theorem~\ref{thm:main-spectrum}]
Suppose both components are nonzero. By Theorem~\ref{thm:uniform64}, the pair is a uniform zero insertion of a primitive pair whose length $m$ divides $n$ and lies within the corresponding theoretical bound. Proposition~\ref{prop:finite-enumeration} gives all possibilities for $m$. The minimal members under divisibility of the three lists in Table~\ref{tab:primitive-counts} are exactly $D_{54},D_{63},D_{72}$. Hence some member of the applicable divisor base divides $n$.

Conversely, Section~\ref{sec:seed-constructions} supplies a pair at every length in each divisor base. Uniform zero insertion gives every multiple. For $(9,0)$, necessity is the known circulant classification~\citep{AAMS2008}, and explicit examples at $13$ and $24$ are also included below. Finally, every member of $D_{63}$ or $D_{72}$ is divisible by a member of $D_{54}$; the $(9,0)$ lengths are already covered by $13$ and $12$ in $D_{54}$. This proves the last assertion as well.
\end{proof}

\subsection{The established scalar case}\label{sec:scalar}
The length bound requires two nonzero components. Known circulant results cover the remaining distribution. Recording their representatives under the same sequence equivalence makes the catalogue and counting formula apply to every weight distribution.
When $B=0$, the condition reduces to $P_A(s)=0$ at every nonzero cyclic shift, and the identity $AA^*=9$ defines a circulant weighing matrix $CW(n,9)$. For a single sequence, Definition~\ref{def:equivalence} becomes $A(x)\mapsto\varepsilon x^aA(x^u)$, where $\varepsilon\in\{1,-1\}$ and $\gcd(u,n)=1$; reversal is already included by $u=-1$. This is sequence equivalence, rather than unrestricted signed row and column equivalence of matrices.

\citet[Theorem 3]{AAMS2008} prove that proper circulant weighing matrices of weight nine exist exactly at orders $13$, $24$ and $26$. Their definition of proper agrees with primitiveness in Definition~\ref{def:equivalence}. We incorporate this known result and the uniqueness result at order $24$ from~\citet{SS2013}. The following short orbit calculation records all scalar representatives in the equivalence convention of this paper; it is included for completeness and is not claimed as a new scalar classification.

\begin{proposition}\label{prop:scalar-five}
The distribution $(9,0)$ has exactly five primitive sequence-equivalence classes: two at length $13$, one at length $24$, and two at length $26$. Their signed supports are listed in Appendix~\ref{app:scalar-catalogue}.
\end{proposition}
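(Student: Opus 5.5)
We take the known classification as the starting point and reduce the statement to an orbit computation. By \citet[Theorem 3]{AAMS2008}, a primitive circulant weighing matrix $CW(m,9)$ exists only for $m\in\{13,24,26\}$. Their notion of proper agrees with primitiveness in Definition~\ref{def:equivalence}, because when $B=0$ the gcd test of Proposition~\ref{prop:gcd-contraction} involves only $\supp(A)$. So it suffices to work at these three lengths. At each length we determine the orbits of primitive solutions of $AA^*=9$ under the group generated by $A(x)\mapsto -A(x)$, multiplication by $x$, and $A(x)\mapsto A(x^u)$ with $u\in\Z_m^\times$.

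At length $13$, the weight condition $9=q^2$ with $q=3$ corresponds to the projective plane of order three. The standard description is $A=\pm x^a(D_1-D_2)$, where $D_1,D_2$ are suitable combinations of cosets of the cubic residues, or equivalently the known ternary sequences built from the $(13,4,1)$ difference set $\{0,1,3,9\}$. We would enumerate directly. Normalise the unique sign profile, which by Lemma~\ref{lem:augmentation-free} in the scalar case has augmentation $3$ and hence $6$ positive and $3$ negative entries. Fix one entry at $0$, use a unit to fix a second gap, and test all folded difference vectors as in Section~\ref{sec:enumeration}. A full search over $\binom{12}{8}$ supports is tiny. The retained solutions would then be sorted into orbits under $\Z_{13}^\times\ltimes C_{13}\times\{\pm1\}$. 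We expect exactly two orbits, distinguished for instance by the multiplier stabiliser or by whether the negative support is a coset of the cubic residues.

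At length $24$ the uniqueness result of \citet{SS2013} gives one class directly. We would only check that their equivalence, which may include extra matrix operations, induces the same partition as sequence equivalence. This amounts to verifying that the stabiliser computation gives a single orbit of solutions found by the same exhaustive normalised search. At length $26$ we would run the same search, keeping only primitive pairs, that is, those with gcd of support differences with $26$ equal to $1$. This discards the inflations of length-$13$ solutions. We would then canonicalise and count orbits, expecting two.

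The main obstacle is making the orbit count rigorous rather than just reproducing the literature. Citing \citet{AAMS2008} settles which lengths occur but not the number of sequence-equivalence classes. So the key step is the exact canonicalisation: minimise lexicographically over all $u$, shifts and signs, and confirm the count by an orbit--stabiliser check. The sizes of the retained solution sets should equal the sums of $2m\varphi(m)/|\mathrm{Stab}|$ over the representatives. If a closed-form argument is preferred at $13$, we would try a character argument over $\mathbb F_{27}$ or via quadratic and cubic Gauss sums to show that every solution is a sign-twisted cubic-residue combination. For the final statement, however, the computer orbit count with the stabiliser consistency check is the intended proof.
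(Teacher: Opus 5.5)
Your plan is valid, but it follows a different route from the paper's proof. You use \citet[Theorem 3]{AAMS2008} only to restrict the primitive lengths to $13,24,26$. You then count classes at each length by an exhaustive normalised search with exact canonicalisation.

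The paper instead applies the multiplier theorem \citep[Lemma 6]{AAMS2008}. After a cyclic permutation the row satisfies $D(x^3)=D(x)$. So at $n=13$ and $n=26$ the negative support is one multiplication-by-three orbit, and the positive support is a union of two such orbits. Normalising the negative orbit by a unit, and at length $26$ possibly a shift by $13$, leaves only $3$ candidates at length $13$ and $21$ at length $26$. These are checked by hand. Inequivalence is structural: any map $j\mapsto uj+a$ preserving the negative orbit has $3a=0$, hence $a=0$, and $u\in\{1,3,9\}$, which fixes every orbit. At length $24$ the paper cites \citet[Theorem 5.1]{SS2013}, which is already stated under sequence operations.

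Your route is essentially the independent computer check that the paper places in its online resource. It imports less theory: it needs neither the multiplier theorem nor the equivalence convention of \citet{SS2013}. However, its counts are only established once the search is actually run. The paper's argument is short, explains the structure of the solutions, and can be checked without a computer.

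Two details in your asides should be corrected, although they do not affect exact canonicalisation.

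First, the nonzero length-$13$ orbits are cosets of the order-three subgroup $\{1,3,9\}$ of fourth powers, not of the cubic residues $\{1,5,8,12\}$. A three-element negative support is never a coset of the latter. Both length-$13$ classes can be normalised to the same negative support $\{1,3,9\}$, and both have the same stabiliser $\{j\mapsto uj: u\in\{1,3,9\}\}$ of order three. So neither invariant you propose separates them. What separates them is the positive support, $\{2,5,6\}\cup\{7,8,11\}$ versus $\{4,10,12\}\cup\{7,8,11\}$.

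Second, the orbit--stabiliser check with $2m\varphi(m)/|\mathrm{Stab}|$ must be applied to the full unnormalised solution set. Once you fix the sign profile, an entry at $0$ and a second gap, the retained counts are no longer orbit sizes.
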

\begin{proof}
By \citet[Theorem 3]{AAMS2008}, a primitive pair of distribution $(9,0)$ can have length only $13$, $24$ or $26$.
We give a short explicit count for $n=13,26$. Change the overall sign
so that the row sum is three. There are six positive and three negative
entries. A global sign change cannot identify two distinct representatives whose row sums are both three. By the multiplier theorem~\citep[Lemma 6]{AAMS2008}, a
cyclic permutation gives a row $D$ with $D(x^3)=D(x)$, since $3$ is coprime to $n$.
The multiplication-by-three orbits have sizes one and three. There is
one fixed point for $n=13$ and two for $n=26$. It follows that the
negative support is one three-element orbit, while the positive support
is the union of two distinct three-element orbits. Fixed points cannot
occur: neither $3$ nor $6$ can be expressed using one or two fixed
points and a sum of orbit sizes three.

For $n=13$, write
\[
 C_0=\{1,3,9\},\quad C_1=\{2,5,6\},\quad
 C_2=\{4,10,12\},\quad C_3=\{7,8,11\}.
\]
Unit multiplication is transitive on these four orbits, so normalise the
negative support to $C_0$. Of the three choices of two positive orbits,
$C_1\cup C_3$ and $C_2\cup C_3$ give sequences whose PAF vanishes at every nonzero shift.
The remaining choice $C_1\cup C_2$ has PAF $-2$ at shift one.
These two solutions are inequivalent. Indeed, any map $j\mapsto uj+a\pmod{13}$, with $\gcd(u,13)=1$,
that preserves $C_0$ has $3a=0$ modulo $13$, by summing its elements, and
hence $a=0$. Its multiplier lies in $\{1,3,9\}$ and fixes each $C_i$.
Thus there are exactly two classes at length $13$, both primitive.

For $n=26$, put $E_i=2C_i$ and $O_i=E_i+13$ modulo $26$.
The eight triples are $E_i$ and $O_i$ for $0\le i\le3$, while
$0,13$ are fixed points. If the negative triple is odd, cyclically permute the row by
$13$ positions; then use a unit multiplier to normalise the negative support to
$E_0=\{2,6,18\}$. The following complete table tests the $\binom72=21$
possible pairs of positive orbits. An entry $(s,c)$ means that the
PAF at shift $s$ is the nonzero integer $c$; a zero entry means that
the PAF vanishes at every nonzero shift.
\[
\begin{array}{c|rrrrrr}
 &E_2&E_3&O_0&O_1&O_2&O_3\\\hline
 E_1&(2,-2)&0&(1,-1)&(1,-1)&(1,1)&0\\
 E_2&&0&(1,-1)&(1,1)&(1,1)&(1,-1)\\
 E_3&&&(5,1)&(1,1)&0&(1,-2)\\
 O_0&&&&(1,-3)&(1,-3)&(1,-4)\\
 O_1&&&&&(1,-2)&(1,-3)\\
 O_2&&&&&&(1,-3)
\end{array}
\]
Each entry follows by expanding $DD^*$ using the displayed orbit sets;
the four zero entries are exact group-ring identities $DD^*=9$.
The choices $E_1\cup E_3$ and $E_2\cup E_3$ have all support indices
even and are inflations of the two length-$13$ classes. The choices
$E_1\cup O_3$ and $E_3\cup O_2$ are primitive, since their support
differences have gcd one with $26$.
As before, any map $j\mapsto uj+a\pmod{26}$, with $\gcd(u,26)=1$, that preserves $E_0$ must satisfy $3a=0$ modulo $26$, hence
$a=0$. Its multiplier lies in $\{1,3,9\}$ and fixes every $E_i$
and $O_i$. All four valid normal forms are therefore inequivalent.
In particular, there are exactly two primitive classes at length $26$.

\citet[Theorem 5.1]{SS2013} prove that every $CW(24,9)$ is equivalent
under the sequence operations of Definition~\ref{def:equivalence} to
\[
 -1+(1-\delta^4)(\delta+\delta^3)
       +(\alpha+\alpha^2)(1+\delta^4),
 \qquad \alpha^3=\delta^8=1.
\]
Here $\alpha$ and $\delta$ have orders three and eight, respectively. Taking $\alpha=x^8$ and $\delta=x^3$ gives positive support
$\{3,4,8,9,16,20\}$ and negative support $\{0,15,21\}$.
Thus the scalar case has five primitive sequence-equivalence
classes in all: two at $13$, one at $24$, and two at $26$.

\end{proof}

The programs in Online Resource~1 select one representative from each class of three-element negative supports under cyclic permutations and position multipliers coprime to $n$. For each representative, they test every disjoint six-element positive support. This exhaustive check gives the same five primitive classes at $13$, $24$ and $26$. It checks the listed supports and counts; the restriction to those three primitive lengths follows from the cited theoretical result.

\begin{proof}[Proof of Theorem~\ref{thm:complete-generation}]
By Proposition~\ref{prop:gcd-contraction}, independent cyclic permutations followed by maximal contraction give a primitive pair of length $m\mid n$. For two nonzero components, Theorem~\ref{thm:uniform64} puts $m$ within the relevant finite bound, and Proposition~\ref{prop:finite-enumeration} identifies its class with exactly one representative in Appendix~\ref{app:catalogue}. If $B=0$, Proposition~\ref{prop:scalar-five} instead identifies exactly one of the five representatives in Appendix~\ref{app:scalar-catalogue}. Every equivalence at length $m$ lifts to an equivalence after inflation to length $n$: a cyclic permutation by $a$ positions lifts to one by $(n/m)a$ positions, reversals and sign changes lift directly, and a common unit modulo $m$ lifts to a unit modulo $n$ by the Chinese remainder theorem. Hence the original pair is equivalent to the inflation of that representative. Conversely, every such inflation satisfies the complementarity identity by Proposition~\ref{prop:gcd-contraction}.

For uniqueness, the maximal contraction length is invariant under all operations in Definition~\ref{def:equivalence}. An equivalence between two inflated pairs of the same contraction length induces an equivalence between their primitive contractions: the common unit reduces modulo $m$, and the independent component operations descend after independently cyclically permuting the components to put their chosen reference positions at zero. Two different primitive lengths therefore cannot represent the same class at length $n$, nor can two inequivalent primitive representatives of the same length. The counts and maxima follow from Table~\ref{tab:primitive-counts}.
\end{proof}

\subsection{Counting all equivalence classes}\label{sec:counting}
The unique primitive contraction also permits counting without generating the longer sequences. Each primitive class contributes one class at every multiple of its length. Let $N_{rs}(n)$ denote the number of sequence-equivalence classes of distribution $(r,s)$ at length $n$, and use the primitive counts $P_{rs}(m)$ from Table~\ref{tab:primitive-counts}.
\begin{theorem}\label{thm:counting}
For $(r,s)\in\{(5,4),(6,3),(7,2),(9,0)\}$ and every positive integer $n$,
\[
 N_{rs}(n)=\sum_{m\mid n}P_{rs}(m),\qquad
 P_{rs}(n)=\sum_{m\mid n}\mu(n/m)N_{rs}(m),
\]
where the M\"obius function satisfies $\mu(1)=1$, is zero on integers divisible by a prime square, and equals $(-1)^k$ on products of $k$ distinct primes. In particular, the four counting functions have finite support in their primitive form and are determined for every length by Table~\ref{tab:primitive-counts}.
\end{theorem}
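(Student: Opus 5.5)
The plan is to set up a bijection, for each fixed $n$, between the sequence-equivalence classes of distribution $(r,s)$ at length $n$ and the pairs $(m,\mathcal C)$, where $m\mid n$ and $\mathcal C$ is a primitive class of length $m$. The first formula then follows by counting, and the second by M\"obius inversion.

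\textbf{The map from classes to primitive data.} Take a class at length $n$. By Proposition~\ref{prop:gcd-contraction} and the proof of Theorem~\ref{thm:complete-generation}, each pair in it has a maximal contraction, of length $m=n/d$, which is unique up to independent cyclic permutations. That contraction is primitive. The uniqueness part of the proof of Theorem~\ref{thm:complete-generation} shows two things: $m$ is an invariant of the class, and equivalent pairs have equivalent contractions. So we get a well-defined map from classes at length $n$ to $\bigsqcup_{m\mid n}\{\text{primitive classes of length }m\}$.

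\textbf{The inverse map.} Inflation by $t=n/m$ goes back. The existence part of that proof shows that equivalences at length $m$ lift to equivalences at length $n$: shifts by $a$ lift to shifts by $ta$, and units lift by the Chinese remainder theorem. Hence inflation is well defined on classes. Contraction after inflation by $t$ returns the original primitive pair, and inflation after maximal contraction returns the pair up to cyclic permutation. The two maps are therefore mutually inverse, which gives $N_{rs}(n)=\sum_{m\mid n}P_{rs}(m)$.

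\textbf{The main obstacle.} The delicate point is checking that equivalence at length $n$ descends to length $m$. A common unit modulo $n$ reduces to a unit modulo $m$. An independent cyclic shift of a component preserves the property that its support lies in a coset of $t\Z/n\Z$. After we renormalise each reference position to $0$, the shift becomes a multiple of $t$ and so descends. I will cite that argument from the proof of Theorem~\ref{thm:complete-generation} rather than repeat it.

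\textbf{M\"obius inversion and finite support.} The inversion formula is classical. Substituting the first identity gives
\[
\sum_{m\mid n}\mu(n/m)N_{rs}(m)=\sum_{k\mid n}P_{rs}(k)\sum_{k\mid m\mid n}\mu(n/m),
\]
and the inner sum equals $\sum_{e\mid n/k}\mu(e)$, which is $1$ if $k=n$ and $0$ otherwise. Finite support of $P_{rs}$ follows from Theorem~\ref{thm:uniform64}, Proposition~\ref{prop:finite-enumeration} and Proposition~\ref{prop:scalar-five}, which together give exactly the entries of Table~\ref{tab:primitive-counts}. Consequently every $N_{rs}(n)$ is determined by that table.
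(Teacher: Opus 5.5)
Your proposal is correct and follows the paper's proof. The paper also takes the bijection between classes at length $n$ and primitive classes of length dividing $n$ from Theorem~\ref{thm:complete-generation}, sums the counts, and applies M\"obius inversion; you additionally spell out the well-definedness, the descent of equivalences, and the inversion computation, which the paper leaves implicit.
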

\begin{proof}
Theorem~\ref{thm:complete-generation} gives a bijection between classes at length $n$ and primitive classes whose lengths divide $n$. Summing their counts gives the first identity. M\"obius inversion gives the second.
\end{proof}

In particular, writing $\mathbf1_{d\mid n}=1$ when $d\mid n$ and zero otherwise, the scalar count is
\begin{equation}\label{eq:scalar-count}
 N_{90}(n)=2\mathbf1_{13\mid n}+\mathbf1_{24\mid n}+2\mathbf1_{26\mid n}.
\end{equation}
Thus $N_{90}(26)=4$: two classes are zero insertions of length-$13$ representatives and two are primitive at length $26$. The total number of weight-nine sequence-equivalence classes at any length is
\[
 N_9(n)=N_{54}(n)+N_{63}(n)+N_{72}(n)+N_{90}(n),
\]
with every term explicitly determined by Table~\ref{tab:primitive-counts}.

As an independent consistency check, the formula for $N_{54}(n)$ agrees with every published count in the range $400\le n\le500$ from~\citet{GDK2012}: all $58$ positive entries and all $43$ omitted zero entries agree. The companion scripts verify this comparison and reconstruct the earlier counts through length $64$. The $96$ classes in Table~\ref{tab:primitive-counts} are sequence-equivalence classes; different such classes are not asserted to yield inequivalent weighing matrices under unrestricted matrix equivalence.
\section{Consequences for composite lengths and weighing matrices}\label{sec:consequences}
The existence bases $D_{54},D_{63},D_{72}$ are minimal under divisibility. They are not lists of all primitive lengths. For example, a primitive $(5,4)$ pair can have length $25$, even though existence at that length already follows from a length-five pair. The two statements ``some pair exists by zero insertion'' and ``this particular pair is obtained from that smaller length'' are distinct.

Corollary~\ref{cor:all-length-answers} proves the nonexistence conjecture in the corrected formulation given by \citet[Conjecture 3]{G2010}. It also gives existence and nonexistence conditions for a broader class of composite lengths.
\begin{corollary}\label{cor:all-length-answers}
No weight-nine pair exists at $3^a p$ when $a\ge0$ is an integer and $p$ is a prime outside
\[
\{5,7,11,13,17,19,23,29,41,43\}.
\]
If every prime factor of a positive integer $m$ exceeds $43$, a pair exists at $2^b3^a m$, for integers $a,b\ge0$, exactly when $b\ge3$ or when $b\ge2$ and $a\ge1$. In particular, no pair exists at $3^a m$ or $2\cdot3^a m$.
\end{corollary}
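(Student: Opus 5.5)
The corollary follows from Theorem~\ref{thm:main-spectrum}. A weight-nine pair exists at length $n$ exactly when some member of
\[
D_{54}=\{5,7,8,11,12,13,17,19,23,29,41,43\}
\]
divides $n$. Both assertions therefore reduce to deciding, for the stated shapes of $n$, whether any member of $D_{54}$ divides $n$. I would first record the prime factorisations of the base:
\begin{itemize}
\item $5,7,11,13,17,19,23,29,41,43$ are prime;
\item $8=2^3$;
\item $12=2^2\cdot3$.
\end{itemize}
The prime factors occurring in $D_{54}$ are thus $2,3$ and the ten listed primes. Note also that no member of $D_{54}$ is a pure power of three.

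For the first assertion, let $n=3^ap$ with $p$ a prime outside the listed set. Suppose $d\in D_{54}$ divides $n$. Then every prime factor of $d$ lies in $\{3,p\}$.
\begin{itemize}
\item If $d$ is one of the ten listed primes, then $d\ne3$, so $d=p$. This contradicts the choice of $p$.
\item If $d=8$ or $d=12$, then $2\mid n$. This forces $p=2$, and then $n=2\cdot3^a$ has $2$-adic valuation one. But $8$ and $12$ both require $4\mid n$, a contradiction.
\end{itemize}
Hence no member divides $n$, and no pair exists. The case $p=3$ is harmless: then $n$ is a power of three, and no member of $D_{54}$ is a power of three.

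For the second assertion, let $n=2^b3^am$ with every prime factor of $m$ exceeding $43$. A member of $D_{54}$ that is one of the ten odd primes cannot divide $n$. It is not $3$, and it is at most $43$, so it cannot divide $m$. Thus only $8$ and $12$ are available.
\begin{itemize}
\item $8\mid n$ if and only if $b\ge3$, since $m$ is odd.
\item $12\mid n$ if and only if $b\ge2$ and $a\ge1$, since $\gcd(m,6)=1$.
\end{itemize}
This gives exactly the stated criterion. The final sentence is the special cases $b=0$ and $b=1$, where neither condition holds.

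There is no real obstacle. The only care needed is the edge cases:
\begin{itemize}
\item $p=2$ or $p=3$ in the first assertion;
\item $m=1$ in the second, which is allowed since the condition on prime factors is then vacuous.
\end{itemize}
I would also check that the conclusion matches the corrected form of \citet[Conjecture 3]{G2010}.
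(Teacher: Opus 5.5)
Your proposal is correct and is essentially the paper's own argument. Both use the final assertion of Theorem~\ref{thm:main-spectrum} to reduce the corollary to checking that no member of $D_{54}$ divides $3^ap$, and that only $8$ and $12$ can divide $2^b3^am$; you simply spell out the factorisation bookkeeping and the edge cases $p\in\{2,3\}$ and $m=1$, which the paper leaves implicit.
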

\begin{proof}
Under the first restriction, no member of $D_{54}$ divides $3^a p$. Under the second, only $8$ and $12$ can divide $2^b3^a m$. Apply Theorem~\ref{thm:main-spectrum}.
\end{proof}

For lengths $3^a$ and $2\cdot3^a$, Appendix~\ref{app:three-obstruction} gives a direct algebraic proof of nonexistence that does not use the finite classification. It also proves that nonexistence remains valid when any finite number of additional integer coordinates is allowed.

For an explicit primitive composite example, take
\[
 A=(-1,1,1,1,0,0,0,1),\qquad B=(1,0,0,1,-1,-1,0,0).
\]
Their PAFs are $(5,0,1,0,2,0,1,0)$ and $(4,0,-1,0,-2,0,-1,0)$. The pair has total weight nine and zero PAF. Every proper divisor of eight is at most four, where two ternary sequences have total weight at most eight, so it is primitive.

Theorem~\ref{thm:main-spectrum} also decides when a weighing matrix $W(2n,9)$ can be obtained by the two-circulant array~\eqref{eq:weighing-array}. For example, length $47$ is excluded, whereas length $235=5\cdot47$ is admissible. Nonexistence at a prime length therefore does not extend to all its multiples. A pair at length $235$ is obtained by multiplying every support index of the length-five example in Table~\ref{tab:seeds54} by $47$.
\section{Existence constructions and generation of all pairs}\label{sec:seed-constructions}
\subsection{A seed pair at every minimal admissible length}
Tables~\ref{tab:seeds54}--\ref{tab:seeds90} give $33$ seed pairs, one at every minimal admissible divisor for each distribution. They construct at least one pair at every admissible length; generating every pair requires the full $96$-representative catalogue in Appendices~\ref{app:catalogue} and~\ref{app:scalar-catalogue}, as explained in Section~\ref{sec:all-pair-generation}. All positions are numbered from zero. The positive and negative describing sets $A_+,A_-,B_+,B_-$ specify the nonzero entries; all other entries are zero.
The supports for distributions $(5,4)$, $(6,3)$ and $(9,0)$, and for the prime-length $(7,2)$ examples, are taken from Table~1 of~\citet{G2010}, with harmless component sign changes. The five composite $(7,2)$ representatives were regenerated at parameters already known from Table~4 of~\citet{G2010}. Every displayed pair has been verified directly by integer periodic autocorrelation. The companion data file and verification program reproduce these checks.
\begin{table}[htbp]
\centering\small
\caption{Explicit seed pairs of distribution $(5,4)$.}\label{tab:seeds54}
\begin{tabular}{rllll}
\toprule
$n$&$A_+$&$A_-$&$B_+$&$B_-$\\
\midrule
5&$\{1,2,3,4\}$&$\{0\}$&$\{0,1\}$&$\{2,4\}$\\
7&$\{1,2,3,4\}$&$\{0\}$&$\{0,3\}$&$\{2,6\}$\\
8&$\{1,2,3,7\}$&$\{0\}$&$\{0,3\}$&$\{4,5\}$\\
11&$\{1,2,3,9\}$&$\{0\}$&$\{0,2\}$&$\{3,6\}$\\
12&$\{1,3,5,6\}$&$\{0\}$&$\{0,4\}$&$\{2,8\}$\\
13&$\{1,2,3,5\}$&$\{0\}$&$\{0,5\}$&$\{4,11\}$\\
17&$\{1,2,6,8\}$&$\{0\}$&$\{0,8\}$&$\{10,12\}$\\
19&$\{1,2,3,6\}$&$\{0\}$&$\{0,9\}$&$\{4,10\}$\\
23&$\{1,2,5,7\}$&$\{0\}$&$\{0,7\}$&$\{3,13\}$\\
29&$\{1,2,9,21\}$&$\{0\}$&$\{0,5\}$&$\{10,12\}$\\
41&$\{1,2,4,38\}$&$\{0\}$&$\{0,12\}$&$\{5,29\}$\\
43&$\{1,2,4,40\}$&$\{0\}$&$\{0,12\}$&$\{5,24\}$\\
\bottomrule
\end{tabular}
\end{table}
\begin{table}[htbp]
\centering\small
\caption{Explicit seed pairs of distribution $(6,3)$.}\label{tab:seeds63}
\begin{tabular}{rllll}
\toprule
$n$&$A_+$&$A_-$&$B_+$&$B_-$\\
\midrule
7&$\{3,4,6\}$&$\{0,1,5\}$&$\{0,4,5\}$&$\varnothing$\\
10&$\{4,5,9\}$&$\{0,1,3\}$&$\{0,2,6\}$&$\varnothing$\\
11&$\{2,4,9\}$&$\{0,1,8\}$&$\{0,2,3\}$&$\varnothing$\\
12&$\{4,7,10\}$&$\{0,1,8\}$&$\{0,2,4\}$&$\varnothing$\\
13&$\{3,6,12\}$&$\{0,1,2\}$&$\{0,2,5\}$&$\varnothing$\\
15&$\{1,4,9\}$&$\{0,2,6\}$&$\{0,1,2\}$&$\varnothing$\\
17&$\{4,11,15\}$&$\{0,1,3\}$&$\{0,3,8\}$&$\varnothing$\\
19&$\{2,7,15\}$&$\{0,1,13\}$&$\{0,15,17\}$&$\varnothing$\\
23&$\{5,16,21\}$&$\{0,1,4\}$&$\{0,6,21\}$&$\varnothing$\\
25&$\{1,6,15\}$&$\{0,4,10\}$&$\{0,1,3\}$&$\varnothing$\\
29&$\{21,26,28\}$&$\{0,1,4\}$&$\{0,8,17\}$&$\varnothing$\\
\bottomrule
\end{tabular}
\end{table}
\begin{table}[htbp]
\centering\small
\caption{Explicit seed pairs of distribution $(7,2)$.}\label{tab:seeds72}
\begin{tabular}{rllll}
\toprule
$n$&$A_+$&$A_-$&$B_+$&$B_-$\\
\midrule
10&$\{2,3,5,7,9\}$&$\{0,1\}$&$\{0\}$&$\{4\}$\\
12&$\{1,5,6,7,11\}$&$\{0,3\}$&$\{0\}$&$\{6\}$\\
13&$\{0,2,3,5,9\}$&$\{11,12\}$&$\{0\}$&$\{1\}$\\
14&$\{3,4,6,7,13\}$&$\{0,2\}$&$\{0\}$&$\{2\}$\\
15&$\{4,5,6,10,14\}$&$\{0,3\}$&$\{0\}$&$\{6\}$\\
19&$\{6,12,15,16,17\}$&$\{0,7\}$&$\{0\}$&$\{1\}$\\
23&$\{11,14,18,19,20\}$&$\{0,12\}$&$\{0\}$&$\{1\}$\\
25&$\{1,3,16,17,20\}$&$\{0,5\}$&$\{0\}$&$\{6\}$\\
\bottomrule
\end{tabular}
\end{table}
\begin{table}[htbp]
\centering\small
\caption{Explicit seed pairs of distribution $(9,0)$.}\label{tab:seeds90}
\begin{tabular}{rllll}
\toprule
$n$&$A_+$&$A_-$&$B_+$&$B_-$\\
\midrule
13&$\{2,3,5,7,9,10\}$&$\{0,1,4\}$&$\varnothing$&$\varnothing$\\
24&$\{1,5,12,13,17,18\}$&$\{0,6,9\}$&$\varnothing$&$\varnothing$\\
\bottomrule
\end{tabular}
\end{table}

Only one suitable seed is needed to construct a pair at a prescribed admissible length. Recall that uniform zero insertion with an integer factor $t\ge1$ inserts $t-1$ zeros after every entry of both components, including the last, changing length $m$ to $tm$. The following verification justifies this use of the tables; it does not replace the complete primitive catalogue.

\begin{proposition}\label{prop:verified-seeds}
Every pair in Tables~\ref{tab:seeds54}--\ref{tab:seeds90} has total weight nine and zero PAF. Consequently, these tables, together with uniform zero insertion, establish existence for every admissible length and weight distribution.
\end{proposition}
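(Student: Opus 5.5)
The proposition has two parts. The first is a direct verification of the $33$ displayed pairs. The second follows from the first together with Proposition~\ref{prop:gcd-contraction} and the definition of the divisor bases.

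For the first part, I will check each row of Tables~\ref{tab:seeds54}--\ref{tab:seeds90} against the weight and sign profile of Section~\ref{sec:prelim}. The sets $A_+,A_-,B_+,B_-$ must be pairwise disjoint within each component. Their sizes must give the distribution $(r,s)$ with the sign profile $(4,1),(2,2)$, or $(3,3),(3,0)$, or $(5,2),(1,1)$, or $(6,3)$ with $B=0$. Then I verify the identity $AA^*+BB^*=9$ in $\Z[C_n]$ of~\eqref{eq:norm-nine}. The cleanest way is through the folded difference vector of Section~\ref{sec:enumeration}. For each component, list the $\binom{r}{2}$ unordered support pairs with their sign products and add each product to the coordinate $\min(d,n-d)$. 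Zero PAF is then equivalent to the two folded vectors summing to zero, and this equivalence covers the case $d=n/2$ by Lemma~\ref{lem:unordered-all-lengths}. For example, at $n=5$ the component $A=-1+x+x^2+x^3+x^4$ has folded vector $(-1+3,\,-1+3)=(2,2)$. The component $B=1+x-x^2-x^4$ has folded vector $(-2,-2)$, so the sum vanishes. The other rows are finite computations of the same kind, with at most $36$ pairs per pair. For $(9,0)$ the $B$ contribution is empty, so the check is $P_A(s)=0$ for every $s\ne0$. Carrying out these calculations by hand for all $33$ rows is tedious but routine, and the companion verification program reproduces them.

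For the second part, let $n$ be admissible for $(r,s)$. By the definition of the divisor base, some $m\in D_{rs}$ divides $n$. Take the seed of length $m$ from the corresponding table and inflate it by $t=n/m$. By Proposition~\ref{prop:gcd-contraction}, this preserves zero PAF and both component weights. It therefore gives a pair of length $n$ and distribution $(r,s)$.

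The only real obstacle is bookkeeping, namely making sure no row is mis-copied. One subtle point is the coincidence $d=n/2$ at even lengths $8,10,12,14,24$. There the folded coordinate is half the PAF value, and the plan handles it as just described.
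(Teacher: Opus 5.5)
Your plan is essentially the paper's proof. It checks $AA^*+BB^*=9$ row by row, and your folded difference vectors are the same signed-difference count the paper performs with the multisets of equal-sign and opposite-sign differences. It then uses the fact that uniform zero insertion of the seed at some $m\in D_{rs}$ with $m\mid n$ preserves the identity and both component weights.

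One correction to your illustration: at $n=5$ the folded vectors are $(1,1)$ for $A$ and $(-1,-1)$ for $B$, not $(2,2)$ and $(-2,-2)$. The entries of a folded vector must add up to the number of equal-sign pairs minus the number of opposite-sign pairs, which is $6-4=2$ for $A$ and $2-4=-2$ for $B$; indeed $P_A(1)=P_A(2)=1$. The conclusion that the sum vanishes is unaffected, but the row-by-row verification your first part relies on needs more care than this example shows.
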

\begin{proof}
Counting the positions in the positive and negative describing sets gives the stated component weights, which sum to nine. For each displayed pair, subtract the multiset of differences joining opposite signs from the multiset joining equal signs, in each component, and add the two results. Every nonzero residue has coefficient zero, while the zero residue has coefficient nine. This is a finite direct verification of the displayed constructions, reproduced using exact integer arithmetic by the supplied program. Uniform zero insertion preserves the group-ring identity $AA^*+BB^*=9$.
\end{proof}

For an admissible length $n=tm$ and a seed supported on $C_m$, replace each support index $j$ by $tj\pmod n$, preserving its sign. More generally, independent cyclic permutations of the components and a common unit multiplier give
\[
 A_n(x)=x^a A_m(x^{tu}),\qquad B_n(x)=x^b B_m(x^{tu}),\qquad \gcd(u,n)=1.
\]
Every pair constructed in this way has total weight nine and zero PAF, by Proposition~\ref{prop:gcd-contraction}. The supplied seed-construction program selects a valid seed and returns one pair of inflated signed supports for any requested admissible length and distribution. Its output is an existence construction, not an enumeration of every class at that length.

\subsection{Generating every pair at a prescribed length}\label{sec:all-pair-generation}
For a fixed $n$ and one of the distributions $(5,4)$, $(6,3)$, $(7,2)$ or $(9,0)$, retain every representative $(P,Q)$ of that distribution in Appendices~\ref{app:catalogue} and~\ref{app:scalar-catalogue} whose length $m$ divides $n$. Put $t=n/m$. The complete set of pairs of that distribution at length $n$ is given, in $\Z[C_n]$, by
\begin{equation}\label{eq:all-pairs-formula}
\begin{aligned}
 A(x)&=\varepsilon_A x^a P(x^{tu\delta_A}),\\
 B(x)&=\varepsilon_B x^b Q(x^{tu\delta_B}),
\end{aligned}
\qquad
\begin{gathered}
 a,b\in\Z/n\Z,\quad \gcd(u,n)=1,\\
 \varepsilon_A,\varepsilon_B,\delta_A,\delta_B\in\{1,-1\}.
\end{gathered}
\end{equation}
Here $t$ inserts $t-1$ zeros between consecutive entries, $a,b$ specify independent cyclic permutations, $\varepsilon_A,\varepsilon_B$ change component signs, $\delta_A,\delta_B$ reverse components independently, and $u$ is the common unit multiplier. These parameters can yield duplicate pairs, but the primitive representative from which a pair arises is unique under the stated equivalence. Theorem~\ref{thm:complete-generation} proves that every weight-nine pair occurs in~\eqref{eq:all-pairs-formula}. For $(9,0)$ one takes $Q=0$, so the parameters acting on $B$ are immaterial. Interchanging the components covers the opposite ordering of their weights. No search for additional primitive pairs at larger lengths is required.

For example, at length $25$ in distribution $(5,4)$, Appendix~\ref{app:catalogue} supplies one primitive representative of length five and two of length $25$. Inflating the length-five representative with factor five and using the two length-$25$ representatives with factor one gives all three sequence-equivalence classes at length $25$. The single length-five existence seed alone supplies only the first of these classes. Similarly, at length $26$ in distribution $(9,0)$, the two primitive length-$26$ classes cannot be obtained by inflating length-$13$ sequences, although $13\mid26$ already settles existence. This illustrates why the complete primitive catalogue is needed to generate every solution, even when a shorter seed already establishes existence.

\FloatBarrier
\section{Conclusion}
The missing step in the earlier weight-nine investigations was a bound that excludes further primitive pairs at arbitrarily large lengths. The rigidity argument supplies this step for two nonzero components. The signed-difference equations then give bounds of $64$, $48$ and $64$ for the three weight distributions. Exhaustive calculation within these ranges completes the classification. The known constructions therefore give all admissible lengths, and the implication and nonexistence conjectures discussed above hold without a length restriction.

The results can be used at different levels. To decide whether a pair exists at a given length, one tests divisibility by the corresponding base in Theorem~\ref{thm:main-spectrum}. A successful test identifies a seed that can be extended by uniform zero insertion. If every solution is required, Theorem~\ref{thm:complete-generation} supplies the complete set of primitive representatives to use. The counting formula gives the number of classes without constructing the longer sequences. This distinction matters at composite lengths, where additional primitive pairs can coexist with zero insertions of shorter pairs.

Together with the five established circulant classes, the catalogue gives all $96$ primitive sequence-equivalence classes. It supplies every sequence input to the two-circulant construction of weight-nine weighing matrices. It does not decide equivalence under arbitrary matrix operations. The main methodological benefit is a proved stopping point for exhaustive search. At other weights, the corresponding rigidity statement would need a separate proof. Appendix~\ref{app:three-obstruction} gives a direct algebraic proof for the excluded lengths $3^a$ and $2\cdot3^a$, and extends their nonexistence to any finite number of additional integer coordinates.

\paragraph{Reproducibility.}
Online Resource~1 (\texttt{ESM\_1.zip}) contains the search programs, the complete catalogue and results for each length. Its single \texttt{readme.txt} explains every file. The command \texttt{python reproduce.py} checks the stored representatives, seeds and count comparisons. The command \verb|python reproduce.py --full| repeats the exhaustive searches within the proved bounds, including the three known primitive scalar lengths. These searches establish completeness in the bounded ranges; checking the stored representatives alone does not establish nonexistence at unlisted lengths.

\appendix
\section{Nonexistence for cyclic orders \texorpdfstring{$3^a$ and $2\cdot3^a$}{3\textasciicircum a and 2 times 3\textasciicircum a} with additional integer coordinates}\label{app:three-obstruction}
The main existence theorem excludes lengths $3^a$ and $2\cdot3^a$. We give a direct algebraic proof of these exclusions, independent of the finite enumeration, and extend them to support positions with any finite number of additional integer coordinates, each taking values in $\Z$. This explains these infinite families of excluded lengths and rules out the corresponding constructions with additional integer coordinates. The main classification does not depend on this extension. We first extend the reduction modulo three used in Lemma~\ref{lem:augmentation-free}.

\begin{lemma}\label{lem:mod3-laurent-anisotropic}
For Laurent polynomials $F,G$ in finitely many variables over
$\mathbb F_3$, with the operation $*$ inverting all variables, the identity
$FF^*+GG^*=0$ implies $F=G=0$.
\end{lemma}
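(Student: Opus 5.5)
The plan is to reduce to a single variable and then reuse the order-of-vanishing argument from the proof of Lemma~\ref{lem:augmentation-free}. Suppose $FF^*+GG^*=0$ with $(F,G)\ne(0,0)$. Multiplying $F$ and $G$ by a common monomial changes neither $FF^*$ nor $GG^*$, so we may take $F,G$ to be genuine polynomials in $z_1,\ldots,z_m$ over $\mathbb F_3$.

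The first step is a substitution $z_i=w^{N^{i}}$ with $N$ larger than every exponent appearing in $F$ or $G$. This map is injective on monomials of bounded degree, so the images $f(w),g(w)$ are nonzero polynomials whenever $F,G$ are. The substitution is a ring homomorphism that commutes with $*$, since it sends $z_i^{-1}$ to $w^{-N^i}$. It therefore yields $ff^*+gg^*=0$ in $\mathbb F_3[w,w^{-1}]$, with $f,g$ not both zero.

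The second step repeats the argument of Lemma~\ref{lem:augmentation-free} in one variable. Write $w=1+t$ and expand in the power-series ring $\mathbb F_3[[t]]$, where $w^{-1}=(1+t)^{-1}$ is a unit. Let $r$ be the smaller of the $t$-adic orders of $f$ and $g$; this is finite because a nonzero polynomial has a nonzero expansion. Let $a,b$ be the coefficients of $t^r$ in $f$ and $g$, at least one of them nonzero.

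Next we compute the leading term of $f^*$. Since $w^{-1}-1=-t+O(t^2)$, replacing $t$ by $w^{-1}-1$ shows that $f^*=f(w^{-1})$ has leading term $(-1)^r a t^r$, and likewise $g^*$ has leading term $(-1)^r b t^r$. Hence the coefficient of $t^{2r}$ in $ff^*+gg^*$ is $(-1)^r(a^2+b^2)$.

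The final step, and the only real content, is that $a^2+b^2=0$ has no solution with $(a,b)\ne(0,0)$ in $\mathbb F_3$. This holds because $-1$ is not a square modulo three: the squares in $\mathbb F_3$ are $0$ and $1$, and a sum of two of them vanishes only when both are $0$. This contradicts $ff^*+gg^*=0$, so $F=G=0$.

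The main obstacle is arranging that the one-variable reduction preserves both the $*$-operation and nonvanishing; the Kronecker substitution handles both. An alternative would be to run the order-of-vanishing argument at the point $(1,\ldots,1)$ with total degree in the variables $t_i=z_i-1$, taking lowest homogeneous parts $a(t),b(t)$. This gives $(-1)^r(a^2+b^2)=0$ as polynomials over $\mathbb F_3$, which forces $a=b=0$ because $\mathbb F_3[t]$ is a domain and $-1$ is a non-square in its fraction field. Either route works; the substitution route reuses the paper's earlier computation verbatim.
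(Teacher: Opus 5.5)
Your proof is correct and follows essentially the same route as the paper's. Both reduce to one variable by a monomial substitution that is injective on the supports and commutes with $*$. Both then compute the coefficient of $(w-1)^{2r}$ in the norm sum, namely $(-1)^r(a^2+b^2)$, which is nonzero over $\mathbb F_3$. Your Kronecker choice $z_i\mapsto w^{N^i}$ simply makes explicit the paper's unspecified injective substitution $X_i=x^{u_i}$.
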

\begin{proof}
If either polynomial is nonzero, choose integer exponents $u_1,\ldots,u_t$
such that substitution $X_i=x^{u_i}$ is injective on the finite union of
their supports. Their images are not both zero and still satisfy the
identity. At $x=1$, let $r$ be the smaller of their vanishing orders. The
coefficient of $(x-1)^{2r}$ in their norm sum is
$(-1)^r(a^2+b^2)\ne0$ over $\mathbb F_3$, a contradiction.
\end{proof}

The lemma prevents cancellation after reduction modulo three. Evaluation at a root of unity then forces repeated coefficient blocks. These make the evaluated pair vanish, contradicting the complementarity identity.

\begin{theorem}\label{thm:three-times-free}
There is no ternary complementary pair of total weight nine in
$\mathbb Z[C_h\times\mathbb Z^t]$ when $h=3^a$ or $h=2\cdot3^a$, with
integers $a,t\ge0$. Here ternary means that each group element has coefficient
$0$, $1$, or $-1$ in each component, and both components have finite
support.
\end{theorem}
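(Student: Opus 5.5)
Write $G=C_h\times\Z^t$ and suppose $(A,B)$ is a ternary pair in $\Z[G]$ with $AA^*+BB^*=9$ and total weight nine. I would combine a reduction modulo three with evaluation at a root of unity of $3$-power order, and derive a contradiction from the fact that $3$ is totally ramified in the corresponding cyclotomic ring.

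\textbf{Removing the factor $2$.} When $h=2\cdot3^a$, write $C_h=C_2\times C_{3^a}$. The factor $C_2$ would be handled first by a character $\chi_2:C_2\to\{\pm1\}$. Applying both characters of $C_2$ gives two pairs in $\Z[C_{3^a}\times\Z^t]$, still satisfying $FF^*+GG^*=9$. These pairs are no longer ternary, so I would keep the ternary information separately: $A=A_0+gA_1$ with $A_0,A_1$ ternary and disjointly indexed. The aim is to reduce to a statement about integer elements of $\Z[C_{3^a}\times\Z^t]$ with norm sum $9$ and controlled coefficients.

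\textbf{The $3$-adic step.} Let $\zeta$ be a primitive $3^a$-th root of unity, $R=\Z[\zeta]$ and $\pi=1-\zeta$. Then $3=\text{unit}\cdot\pi^{\phi(3^a)}$, and $R/\pi=\mathbb F_3$. I would work with the maps
\[
\Z[C_{3^j}\times\Z^t]\to R_j[z_1^{\pm1},\ldots,z_t^{\pm1}]
\]
for each $j\le a$, together with the augmentation-type maps onto smaller quotients. Reducing modulo $\pi$ sends everything into $\mathbb F_3[z^{\pm}]$, and the involution $*$ descends because complex conjugation is trivial modulo $\pi$. Lemma~\ref{lem:mod3-laurent-anisotropic} then shows that both images vanish modulo $\pi$. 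Dividing by $\pi$ and repeating gives a descent: in the Laurent ring over $R$, the images of $A$ and $B$ are divisible by $\pi^{k}$ whenever $9$ is divisible by $\pi^{2k}$. Working in $\pi$-adic valuation, I would aim to show that both images are divisible by $\pi^{\phi(3^a)}$, that is, by $3$, up to units. This mirrors the proof of Lemma~\ref{lem:augmentation-free}.

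\textbf{From divisibility to a contradiction.} Divisibility by $3$ in every character image should force $A$ and $B$ into $3\Z[C_h\times\Z^t]$ modulo the kernel ideals. Concretely, the coefficients would be constant on cosets in repeated blocks of size $3$. This should follow from induction on $a$ using the filtration $C_{3^a}\supset C_{3^{a-1}}\supset\cdots$ and the description of $\Z[C_{3^a}]$ as a fibre product of the cyclotomic rings. With only nine nonzero $\pm1$ entries, these block repetitions force the compressed pair to look like $3\cdot(\text{unit})$, exactly as in Lemma~\ref{lem:augmentation-free}. Meanwhile the character at a primitive $3^a$-th root would vanish, since a block summed over a full coset of order $3$ gives $1+\omega+\omega^2=0$. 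The character images at every level would then vanish, contradicting $AA^*+BB^*=9\ne0$.

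\textbf{Main obstacle.} The hardest step is the lifting from ``divisible by $\pi^{\phi(3^a)}$ in each cyclotomic image'' to genuine block structure in $\Z[G]$, with both the free coordinates and the $C_2$ factor present. I would first try to prove the case $t=0$, $h=3^a$ by induction on $a$ via compression to $C_{3^{a-1}}$. The free coordinates would then be handled by the injective monomial substitution of Lemma~\ref{lem:mod3-laurent-anisotropic}. The case $h=2\cdot3^a$ would be handled by the two $C_2$-characters, using that the ternary bound of nine entries in total leaves no room for two independent norm-$9$ decompositions.
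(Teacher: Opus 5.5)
Your $\pi$-adic descent is sound and matches the paper's first step. Conjugation is trivial on $\mathcal O/\pi\cong\mathbb F_3$ and $\pi\overline{\pi}$ has valuation two, so Lemma~\ref{lem:mod3-laurent-anisotropic} removes one factor of $\pi$ at a time. This continues until the images of $A$ and $B$ at a primitive $3^a$th root $\zeta$ have every coefficient in $3\mathcal O$. Your target is also right: invariance under the subgroup of order three, then vanishing at $\zeta$. The gap is the bridge between them, which you flag as the main obstacle and leave to an induction on $a$ via compression to $C_{3^{a-1}}$. That cannot work as described. Compression destroys ternarity, so no inductive hypothesis is available. The obstruction also lives only at the top level: at the trivial character the pair is $\pm3$ times a monomial and $0$, exactly as Lemma~\ref{lem:augmentation-free} permits. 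Likewise, ``the character images at every level would then vanish'' does not follow, since $1+\omega+\omega^2=0$ only kills characters nontrivial on the subgroup of order three. The missing idea is that this single top character suffices. We have $\mathcal O/3\mathcal O\cong\mathbb F_3[y]/(\Phi_{3^a})$ with $\Phi_{3^a}(y)=1+y^b+y^{2b}$ and $b=3^{a-1}$. So for each free exponent, the reduction modulo three of the cyclic coefficient polynomial is divisible by $1+y^b+y^{2b}$. That polynomial has degree below $3b$, so its three blocks of length $b$ agree modulo three. Because $-1,0,1$ are distinct residues, the blocks agree over $\Z$. Hence both images at $\zeta$ vanish, contradicting $AA^*+BB^*=9$ evaluated at $\zeta$.

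For $h=2\cdot3^a$, ``no room for two independent norm-$9$ decompositions'' is not an argument. Lifting blocks directly from $A_0\pm A_1$ also fails, because their coefficients lie in $\{-2,\dots,2\}$ and $2\equiv-1\pmod{3}$. Your splitting can be repaired. Both $(A_0\pm A_1)(\zeta)$ lie in $3\mathcal O$ and $2$ is invertible modulo three, so the ternary halves $A_0(\zeta)$ and $A_1(\zeta)$ lie in $3\mathcal O$ too, and the argument above applies to each. The paper instead evaluates at $\zeta$ and $-\zeta$. It uses that $\Phi_{3^a}\equiv(y-1)^{2b}$ and $\Phi_{2\cdot3^a}\equiv(y+1)^{2b}$ are coprime modulo three, so their product $1+y^{2b}+y^{4b}$ divides the reduction and forces three equal blocks of length $2b$. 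Finally, you never treat $a=0$, that is $h=1,2$. There $\zeta=1$ and $\pi=0$, so the cyclotomic machinery is empty. Evaluating the cyclic generator at $1$ gives coefficients of absolute value at most two. This contradicts Lemma~\ref{lem:augmentation-free}, which requires a coefficient $\pm3$.
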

\begin{proof}
Suppose $a\ge1$, put $b=3^{a-1}$, and evaluate the cyclic generator at
$\zeta$, a primitive $3^a$th root. The images of the two components are
Laurent polynomials $\alpha,\beta$ in $t$ free variables over
$\mathcal O=\mathbb Z[\zeta]$. The operation $*$ now conjugates coefficients and inverts the free variables. Set $\pi=1-\zeta$ and $e=2b$. For $c\in\mathcal O$, let $v_\pi(c)$ be the exponent of $\pi$ dividing $c$, with $v_\pi(0)=\infty$. By the standard cyclotomic ramification facts~\citep{Washington1997}, the prime
three is totally ramified with $v_\pi(3)=e$, residue field $\mathbb F_3$,
and $\overline\pi/\pi\equiv-1\pmod\pi$, where the bar denotes complex conjugation. Extend $v_\pi$ to these Laurent
polynomials by taking the minimum valuation of their coefficients.

Let $r=\min(v_\pi(\alpha),v_\pi(\beta))$. Divide by $\pi^r$ and reduce
modulo $\pi$. At least one of the resulting residue Laurent polynomials
$F,G$ is nonzero, and the leading residue of the norm sum is
$(-1)^r(FF^*+GG^*)$. Lemma \ref{lem:mod3-laurent-anisotropic} says that
this does not vanish. Hence
\[
 v_\pi(\alpha\alpha^*+\beta\beta^*)=2r.
\]
The right side of the original identity is nine, with valuation $2e$,
so $r=e$. Every coefficient of $\alpha$ and $\beta$ consequently lies in
$3\mathcal O$.

For each fixed free exponent $z$, let $a_z(y)$ and $b_z(y)$ be the cyclic
coefficient polynomials of the original pair. If $h=3^a=3b$, their degrees
are below $3b$ and their coefficients are ternary. Write $\Phi_m$ for the $m$th cyclotomic polynomial. Since
$\mathcal O/3\mathcal O\cong\mathbb F_3[y]/(\Phi_{3^a}(y)\bmod3)$,
divisibility of
$a_z(\zeta)$ and $b_z(\zeta)$ by three says that their reductions modulo
three are divisible by
\[
 \Phi_{3^a}(y)=1+y^b+y^{2b}.
\]
Thus their three blocks of length $b$ have equal coefficients modulo
three. Since $-1,0,1$ represent three distinct residues, those blocks are
equal over the integers. Every $a_z(\zeta)$ and $b_z(\zeta)$ is therefore
zero, contrary to $\alpha\alpha^*+\beta\beta^*=9$.

If $h=2\cdot3^a=6b$, repeat the same valuation argument at $-\zeta$.
The reductions of every cyclic coefficient polynomial are now divisible
by both $\Phi_{3^a}(y)$ and $\Phi_{2\cdot3^a}(y)$, which are relatively
prime modulo three. Their product is
\[
 (1+y^b+y^{2b})(1-y^b+y^{2b})=1+y^{2b}+y^{4b}.
\]
Each original polynomial has degree below $6b$, so its quotient by this
degree-$4b$ product has degree below $2b$. Its three coefficient blocks
of length $2b$ are therefore equal modulo three and hence equal over the
integers. Evaluation at $\zeta$ again
annihilates every block polynomial and gives a contradiction.

For $a=0$, evaluate the cyclic generator at one. For $h=1$ or
$h=2$, each coefficient in either resulting Laurent polynomial has
absolute value at most two. Generic specialisation of the free variables
to one variable, injective on their supports, and Lemma
\ref{lem:augmentation-free} would require a coefficient of absolute
value three. This is impossible.
\end{proof}

\section{Complete primitive catalogue for the nonzero distributions}\label{app:catalogue}
The tables give all $91$ primitive classes with two nonzero components under Definition~\ref{def:equivalence}. Their indices are zero-based, and $A_+,A_-,B_+,B_-$ are the positive and negative describing sets. Unlisted entries are zero. The records were generated as in Section~\ref{sec:enumeration} and checked for zero PAF and pairwise inequivalence. Use every representative whose length divides the required length in formula~\eqref{eq:all-pairs-formula}.

{\small\setlength{\tabcolsep}{5pt}
\begin{longtable}{rrllll}
\caption{Primitive representatives of distribution $(5,4)$.}\label{tab:full54}\\
\toprule $n$&Class&$A_+$&$A_-$&$B_+$&$B_-$\\\midrule\endfirsthead
\multicolumn{6}{c}{\tablename\ \thetable\ (continued)}\\\toprule $n$&Class&$A_+$&$A_-$&$B_+$&$B_-$\\\midrule\endhead
\midrule\multicolumn{6}{r}{Continued on the next page}\\\endfoot\bottomrule\endlastfoot
5&1&$\{1,2,3,4\}$&$\{0\}$&$\{0,1\}$&$\{2,4\}$\\
7&1&$\{1,2,3,4\}$&$\{0\}$&$\{0,3\}$&$\{1,4\}$\\
8&1&$\{1,2,3,7\}$&$\{0\}$&$\{0,1\}$&$\{2,5\}$\\
8&2&$\{1,3,4,5\}$&$\{0\}$&$\{0,1\}$&$\{2,7\}$\\
8&3&$\{1,3,4,5\}$&$\{0\}$&$\{0,3\}$&$\{1,6\}$\\
10&1&$\{1,2,3,5\}$&$\{0\}$&$\{0,3\}$&$\{1,6\}$\\
10&2&$\{1,2,3,9\}$&$\{0\}$&$\{0,2\}$&$\{4,8\}$\\
10&3&$\{1,2,7,9\}$&$\{0\}$&$\{0,1\}$&$\{3,6\}$\\
11&1&$\{1,2,3,9\}$&$\{0\}$&$\{0,2\}$&$\{3,6\}$\\
11&2&$\{1,2,3,10\}$&$\{0\}$&$\{0,1\}$&$\{2,7\}$\\
11&3&$\{1,2,4,5\}$&$\{0\}$&$\{0,4\}$&$\{1,7\}$\\
11&4&$\{1,2,5,7\}$&$\{0\}$&$\{0,2\}$&$\{4,8\}$\\
11&5&$\{1,2,5,10\}$&$\{0\}$&$\{0,1\}$&$\{4,8\}$\\
12&1&$\{1,3,5,6\}$&$\{0\}$&$\{0,4\}$&$\{2,8\}$\\
12&2&$\{2,4,8,10\}$&$\{0\}$&$\{0,3\}$&$\{6,9\}$\\
13&1&$\{1,2,3,5\}$&$\{0\}$&$\{0,5\}$&$\{1,7\}$\\
13&2&$\{1,2,4,10\}$&$\{0\}$&$\{0,2\}$&$\{4,8\}$\\
13&3&$\{1,2,6,12\}$&$\{0\}$&$\{0,1\}$&$\{3,5\}$\\
13&4&$\{1,2,8,10\}$&$\{0\}$&$\{0,3\}$&$\{6,9\}$\\
13&5&$\{1,3,10,11\}$&$\{0\}$&$\{0,2\}$&$\{4,7\}$\\
14&1&$\{1,2,5,11\}$&$\{0\}$&$\{0,2\}$&$\{4,6\}$\\
14&2&$\{1,2,7,9\}$&$\{0\}$&$\{0,2\}$&$\{4,8\}$\\
14&3&$\{1,3,7,8\}$&$\{0\}$&$\{0,1\}$&$\{2,5\}$\\
14&4&$\{1,3,7,10\}$&$\{0\}$&$\{0,1\}$&$\{3,6\}$\\
14&5&$\{1,4,7,11\}$&$\{0\}$&$\{0,1\}$&$\{4,9\}$\\
14&6&$\{1,6,7,13\}$&$\{0\}$&$\{0,1\}$&$\{3,6\}$\\
15&1&$\{1,2,3,6\}$&$\{0\}$&$\{0,5\}$&$\{1,10\}$\\
15&2&$\{1,2,4,8\}$&$\{0\}$&$\{0,3\}$&$\{6,12\}$\\
15&3&$\{1,2,10,14\}$&$\{0\}$&$\{0,1\}$&$\{4,9\}$\\
15&4&$\{3,5,6,10\}$&$\{0\}$&$\{0,5\}$&$\{1,7\}$\\
16&1&$\{1,3,4,7\}$&$\{0\}$&$\{0,5\}$&$\{2,11\}$\\
16&2&$\{1,3,4,13\}$&$\{0\}$&$\{0,3\}$&$\{5,10\}$\\
17&1&$\{1,2,6,8\}$&$\{0\}$&$\{0,2\}$&$\{4,12\}$\\
17&2&$\{1,2,8,10\}$&$\{0\}$&$\{0,2\}$&$\{4,8\}$\\
17&3&$\{1,4,5,14\}$&$\{0\}$&$\{0,2\}$&$\{4,9\}$\\
19&1&$\{1,2,3,6\}$&$\{0\}$&$\{0,6\}$&$\{1,10\}$\\
19&2&$\{1,2,9,11\}$&$\{0\}$&$\{0,3\}$&$\{6,12\}$\\
19&3&$\{1,2,9,18\}$&$\{0\}$&$\{0,1\}$&$\{4,8\}$\\
19&4&$\{1,2,12,14\}$&$\{0\}$&$\{0,4\}$&$\{8,13\}$\\
20&1&$\{1,2,3,19\}$&$\{0\}$&$\{0,6\}$&$\{2,14\}$\\
20&2&$\{1,2,7,9\}$&$\{0\}$&$\{0,3\}$&$\{6,15\}$\\
20&3&$\{1,2,13,15\}$&$\{0\}$&$\{0,3\}$&$\{6,11\}$\\
20&4&$\{2,4,6,12\}$&$\{0\}$&$\{0,1\}$&$\{2,11\}$\\
20&5&$\{2,6,8,16\}$&$\{0\}$&$\{0,3\}$&$\{6,13\}$\\
21&1&$\{1,7,8,14\}$&$\{0\}$&$\{0,3\}$&$\{6,15\}$\\
21&2&$\{1,8,9,18\}$&$\{0\}$&$\{0,3\}$&$\{7,14\}$\\
22&1&$\{1,2,3,21\}$&$\{0\}$&$\{0,6\}$&$\{2,12\}$\\
22&2&$\{1,2,5,7\}$&$\{0\}$&$\{0,7\}$&$\{3,16\}$\\
22&3&$\{1,2,17,19\}$&$\{0\}$&$\{0,3\}$&$\{7,16\}$\\
23&1&$\{1,2,5,7\}$&$\{0\}$&$\{0,7\}$&$\{3,13\}$\\
23&2&$\{1,2,11,13\}$&$\{0\}$&$\{0,3\}$&$\{6,12\}$\\
25&1&$\{1,2,16,18\}$&$\{0\}$&$\{0,4\}$&$\{8,15\}$\\
25&2&$\{1,5,14,15\}$&$\{0\}$&$\{0,5\}$&$\{9,17\}$\\
28&1&$\{1,7,14,15\}$&$\{0\}$&$\{0,2\}$&$\{4,8\}$\\
28&2&$\{2,4,8,16\}$&$\{0\}$&$\{0,3\}$&$\{6,17\}$\\
28&3&$\{2,4,8,22\}$&$\{0\}$&$\{0,5\}$&$\{10,19\}$\\
29&1&$\{1,2,9,21\}$&$\{0\}$&$\{0,2\}$&$\{7,12\}$\\
41&1&$\{1,2,4,38\}$&$\{0\}$&$\{0,12\}$&$\{5,29\}$\\
43&1&$\{1,2,4,40\}$&$\{0\}$&$\{0,12\}$&$\{5,24\}$\\
\end{longtable}}

{\small\setlength{\tabcolsep}{5pt}
\begin{longtable}{rrllll}
\caption{Primitive representatives of distribution $(6,3)$.}\label{tab:full63}\\
\toprule $n$&Class&$A_+$&$A_-$&$B_+$&$B_-$\\\midrule\endfirsthead
\multicolumn{6}{c}{\tablename\ \thetable\ (continued)}\\\toprule $n$&Class&$A_+$&$A_-$&$B_+$&$B_-$\\\midrule\endhead
\midrule\multicolumn{6}{r}{Continued on the next page}\\\endfoot\bottomrule\endlastfoot
7&1&$\{0,1,3\}$&$\{2,4,5\}$&$\{0,1,3\}$&$\varnothing$\\
10&1&$\{0,1,3\}$&$\{4,5,9\}$&$\{0,2,6\}$&$\varnothing$\\
11&1&$\{0,1,3\}$&$\{2,7,9\}$&$\{0,1,5\}$&$\varnothing$\\
12&1&$\{0,1,5\}$&$\{3,6,9\}$&$\{0,2,4\}$&$\varnothing$\\
13&1&$\{0,1,2\}$&$\{3,6,12\}$&$\{0,2,5\}$&$\varnothing$\\
13&2&$\{0,1,4\}$&$\{2,7,9\}$&$\{0,2,7\}$&$\varnothing$\\
13&3&$\{0,1,4\}$&$\{2,8,10\}$&$\{0,2,6\}$&$\varnothing$\\
15&1&$\{0,1,3\}$&$\{2,8,12\}$&$\{0,1,8\}$&$\varnothing$\\
15&2&$\{0,1,3\}$&$\{2,11,12\}$&$\{0,4,8\}$&$\varnothing$\\
15&3&$\{0,1,5\}$&$\{2,10,12\}$&$\{0,3,6\}$&$\varnothing$\\
17&1&$\{0,1,3\}$&$\{4,11,15\}$&$\{0,3,8\}$&$\varnothing$\\
19&1&$\{0,1,3\}$&$\{2,6,7\}$&$\{0,6,12\}$&$\varnothing$\\
19&2&$\{0,1,4\}$&$\{3,8,14\}$&$\{0,2,9\}$&$\varnothing$\\
19&3&$\{0,1,8\}$&$\{2,11,15\}$&$\{0,2,5\}$&$\varnothing$\\
20&1&$\{0,1,9\}$&$\{3,10,13\}$&$\{0,2,6\}$&$\varnothing$\\
21&1&$\{0,1,5\}$&$\{10,18,20\}$&$\{0,3,9\}$&$\varnothing$\\
21&2&$\{0,1,8\}$&$\{9,15,18\}$&$\{0,4,11\}$&$\varnothing$\\
23&1&$\{0,1,4\}$&$\{5,16,21\}$&$\{0,2,8\}$&$\varnothing$\\
25&1&$\{0,1,5\}$&$\{6,16,22\}$&$\{0,3,11\}$&$\varnothing$\\
29&1&$\{0,1,4\}$&$\{21,26,28\}$&$\{0,8,17\}$&$\varnothing$\\
\end{longtable}}

{\small\setlength{\tabcolsep}{5pt}
\begin{longtable}{rrllll}
\caption{Primitive representatives of distribution $(7,2)$.}\label{tab:full72}\\
\toprule $n$&Class&$A_+$&$A_-$&$B_+$&$B_-$\\\midrule\endfirsthead
\multicolumn{6}{c}{\tablename\ \thetable\ (continued)}\\\toprule $n$&Class&$A_+$&$A_-$&$B_+$&$B_-$\\\midrule\endhead
\midrule\multicolumn{6}{r}{Continued on the next page}\\\endfoot\bottomrule\endlastfoot
10&1&$\{0,1,2,4,6\}$&$\{5,8\}$&$\{0\}$&$\{2\}$\\
12&1&$\{0,1,2,4,9\}$&$\{5,11\}$&$\{0\}$&$\{6\}$\\
12&2&$\{0,1,2,6,8\}$&$\{4,7\}$&$\{0\}$&$\{6\}$\\
12&3&$\{0,1,3,5,6\}$&$\{2,10\}$&$\{0\}$&$\{6\}$\\
13&1&$\{0,1,2,6,9\}$&$\{5,7\}$&$\{0\}$&$\{2\}$\\
14&1&$\{0,1,3,4,8\}$&$\{5,7\}$&$\{0\}$&$\{2\}$\\
15&1&$\{0,1,2,6,10\}$&$\{11,14\}$&$\{0\}$&$\{6\}$\\
19&1&$\{0,1,2,4,13\}$&$\{3,8\}$&$\{0\}$&$\{2\}$\\
23&1&$\{0,1,2,6,9\}$&$\{8,20\}$&$\{0\}$&$\{1\}$\\
25&1&$\{0,1,3,7,17\}$&$\{18,23\}$&$\{0\}$&$\{4\}$\\
28&1&$\{0,1,3,8,19\}$&$\{20,26\}$&$\{0\}$&$\{6\}$\\
28&2&$\{0,1,3,10,23\}$&$\{4,8\}$&$\{0\}$&$\{10\}$\\
\end{longtable}}

\section{Complete primitive catalogue for the scalar distribution}\label{app:scalar-catalogue}
These are the five established primitive $(9,0)$ classes, with $B=0$. Proposition~\ref{prop:scalar-five} verifies their completeness from the cited circulant results. All indices are zero-based and unlisted entries are zero. Together with Appendix~\ref{app:catalogue}, they complete the $96$ representatives used in formula~\eqref{eq:all-pairs-formula}.

\begin{table}[htbp]\centering\small
\caption{All five primitive representatives of distribution $(9,0)$; $B=0$.}\label{tab:full90}
\begin{tabular}{rrll}\toprule
$n$&Class&$A_+$&$A_-$\\\midrule
13&1&$\{2,5,6,7,8,11\}$&$\{1,3,9\}$\\
13&2&$\{4,7,8,10,11,12\}$&$\{1,3,9\}$\\
24&1&$\{3,4,8,9,16,20\}$&$\{0,15,21\}$\\
26&1&$\{1,3,4,9,10,12\}$&$\{2,6,18\}$\\
26&2&$\{7,11,14,16,21,22\}$&$\{2,6,18\}$\\
\bottomrule\end{tabular}\end{table}

\setlength{\bibsep}{3pt}

\end{document}